\documentclass[12pt,draft]{article} 
\usepackage{amsmath,amssymb,amsthm,amsfonts, indentfirst, empheq}
\usepackage{enumerate,color,bm,graphicx,here}
\usepackage{multicol}
\usepackage{mathrsfs}
\makeatletter
\def\@cite#1#2{[{{\bfseries #1}\if@tempswa , #2\fi}]}
\renewcommand{\section}{%
\@startsection{section}{1}{\z@}
{0.5truecm plus -1ex minus -.2ex}%
{1.0ex plus .2ex}{\bfseries\large}}
\def\@seccntformat#1{\csname the#1\endcsname.\ }
\makeatother

\numberwithin{equation}{section} 
\theoremstyle{theorem}
\newtheorem{thm}{Theorem}[section]

\newtheorem{lem}[thm]{Lemma}

\theoremstyle{definition}
\newtheorem{df}[thm]{Definition}
\newtheorem{remark}{Remark}[section]

\newcommand{\ep}{\varepsilon}
\newcommand{\pa}{\partial}
\newcommand{\R}{\mathbb{R}}

\begin{document}
\footnote[0]
    {2020{\it Mathematics Subject Classification}\/. 
    Primary: 35K65; Secondary: 35A01, 35A02, 92C17.
    }
    
\footnote[0]
    {{\it Key words and phrases}\/: 
    chemotaxis; degenerate; volume-filling effect;
            source term.
    }
\begin{center}
    \Large{{\bf Existence and uniqueness of 
                    global weak solutions \\
                    to degenerate volume-filling chemotaxis systems \\
                    with source terms}}
\end{center}
\vspace{5pt}
\begin{center}
    Osuke Shibata%
   \footnote[0]{
    E-mail: 
    {\tt 1125522@ed.tus.ac.jp}
    }\\
    \vspace{12pt}
    Department of Mathematics, 
    Tokyo University of Science\\
    1-3, Kagurazaka, Shinjuku-ku, 
    Tokyo 162-8601, Japan\\
    \vspace{2pt}
\end{center}
\begin{center}    
    \small \today
\end{center}

\vspace{2pt}
\newenvironment{summary}
{\vspace{.5\baselineskip}\begin{list}{}{%
     \setlength{\baselineskip}{0.85\baselineskip}
     \setlength{\topsep}{0pt}
     \setlength{\leftmargin}{12mm}
     \setlength{\rightmargin}{12mm}
     \setlength{\listparindent}{0mm}
     \setlength{\itemindent}{\listparindent}
     \setlength{\parsep}{0pt}
     \item\relax}}{\end{list}\vspace{.5\baselineskip}}
\begin{summary}
{\footnotesize {\bf Abstract.}
This paper is concerned with 
a no-flux initial-boundary value problem 
for the 
degenerate
volume-filling chemotaxis system
with source terms,
\begin{align*}
    \begin{cases}
        u_t = \nabla \cdot (D(u,v) \nabla u - h(u,v) \nabla v) + f(x, u, v),
        & x\in \Omega, \  t>0,
        \\
        v_t = \Delta v + g(u,v),
        & x\in \Omega, \  t>0
    \end{cases}
\end{align*}
in a smoothly bounded domain $\Omega \subset \mathbb{R}^N$ 
$(N \in \mathbb{N})$.
It is shown that 
when 
$D$, $h$, $f$ and $g$
satisfy suitable assumptions involving 
$D(1,\cdot)=h(0,\cdot)=h(1,\cdot)=f(\cdot,0,\cdot) = f(\cdot,1,\cdot) = 0$, 
for nonnegative initial data $u_0$ and $v_0$ 
with $u_0\le 1$,
there exists a global weak solution $(u, v)$ 
with $u\le 1$.
In addition, 
uniqueness of global weak solutions is established
when $D(r,s) = D(r)$
for all $r\in[0,1]$ and $s\in[0,\infty)$, 
and $D$, $h$, $f$, $g$ and $v_0$ are supposed to satisfy additional conditions.} 
\end{summary}

\vspace{10pt}

\newpage

\section{Introduction}\label{Sec:Intro}
{\it Chemotaxis} is a phenomenon in which organisms direct their movement according to
the presence of chemicals. 
In \cite{KS-1917-JTB} and \cite{K-1980-LNinB}
(see also \cite{BBTW-2015-MMMA}),
Keller and Segel introduced the chemotaxis model 
of the form
\begin{align}\label{Sys:KS} 
    \begin{cases}
        n_t = \nabla \cdot (D_n(n,S) \nabla n 
                         - \chi(n,S)n \nabla S) + H(n,S),
        \\
        S_t = D_S(n,S) \Delta S + K(n,S),
    \end{cases}
\end{align}
where $n$ and $S$ represent the organism density
and
the chemoattractant concentration, respectively, and where  
$H$ and $K$ describe sources related to interactions.

When $D_n(n,S)=D_S(n,S)=\chi(n,S)=1$,
$H(n,S)=\kappa n - \mu n^{\alpha}$
$(\kappa\in \R,\,\mu>0, \, \alpha>1)$ and
$K(n,S)=-S+n$,
the model \eqref{Sys:KS} reads 
\begin{align}	\label{Sys:logistic}					
    \begin{cases}
        n_t = \Delta n - \nabla \cdot (n\nabla S) 
                                             + \kappa n - \mu n^{\alpha},
        \\
       S_t = \Delta S -S + n,
    \end{cases}
\end{align}
and it is usually considered 
in a smoothly bounded domain $\Omega \subset \R^{N}$ 
$(N \in \mathbb{N})$ 
under homogeneous Neumann boundary conditions. 
In \cite{OTYM-2002-NA}, 
global weak solutions
of \eqref{Sys:logistic} with $\alpha=2$ 
were constructed  
for nonnegative initial data in certain spaces
when $N=2$. 
Later on,
it was proved 
in \cite{W-2010-CPDE} that 
there exists a unique global bounded classical solution 
of \eqref{Sys:logistic} with $\alpha=2$ 
in $N$-dimensional bounded convex domains,  
provided that 
$\mu$ is large enough.
Moreover, 
in \cite{C-2017-DCDS-B},   
the convexity 
assumption
was removed 
and it was established that the solution converges
to $(\frac{\kappa_{+}}{\mu}, \frac{\kappa_{+}}{\mu})$ 
in $L^\infty(\Omega) \times L^\infty(\Omega)$  
as $t \to \infty$.
On the other hand, 
according to \cite{L-2015-JDE}, 
it was proved that 
if $\Omega$ is convex 
with $N=3$ 
and $\kappa$ is not too large,  
then weak solutions 
of \eqref{Sys:logistic} 
with $\alpha=2$ 
become smooth 
after some time
for all $\mu > 0$. 
Also, in \cite{VW-2028-DCDS-B}, 
it was shown that 
for $\alpha \in 
(\frac95, 2)$ and $N=3$, 
there exist global very weak solutions 
of \eqref{Sys:logistic}, 
which become smooth 
after some time. 
As for the parabolic--elliptic variant, 
not only global solvability 
but also finite-time blow-up 
were studied 
(see e.g., \cite{TW-2007-CPDE}, \cite{KS-2016-NA}, \cite{W-2018-ZAMP}, \cite{F-2021-NoDEA}). 
There are also some studies 
in the case that 
the function $H$ depends 
on the spatial variable $x$ 
as well 
(see e.g., \cite{F-2020-NARWA}, \cite{BFL-2021-ZAMP}, \cite{BFLM-2023-NARWA}, \cite{FLM-2025-DCDS}).

Apart from this, when $D_n(n,S)=nS$, $\chi(n,S)=n^{\alpha-1} S$ $(\alpha>1)$, 
$H(n,S)=nS$, $D_S(n,S)=1$ 
and
$K(n,S)=-nS$,
the model \eqref{Sys:KS} 
is reduced to the doubly degenerate nutrient taxis system 
\begin{align} \label{DD} 
    \begin{cases}
        n_t = \nabla \cdot (nS \nabla n)
               - \nabla \cdot (n^{\alpha} S \nabla S)
        +nS,
        \\
       S_t = \Delta S - nS.
    \end{cases}
\end{align}
When $N=1$ and $\alpha=2$, 
for all sufficiently regular initial data  
it was shown in \cite{W-2021-TAMS} that 
there exist global weak solutions $(n,S)$ 
of \eqref{DD} 
and that  
these solutions 
converge to $(n_{\infty}, 0)$ 
in $L^\infty(\Omega) \times W^{1,\infty}(\Omega)$ 
with some $n_{\infty} \in C(\overline{\Omega})$ 
as $t \to \infty$, 
provided that 
the initial datum $n_0$  
satisfies 
$\int_\Omega \log n_0(x) \,dx > - \infty$.
This restriction was removed 
in \cite{LW-2022-CPAA}. 
Moreover, 
in \cite{L-2022-JDE}, 
it was shown that 
if $\Omega$ is convex, 
then the system \eqref{DD} 
has a global weak solution, 
provided that 
$\alpha \in (1, \frac32)$ when $N=2$, and 
$\alpha \in (\frac76, \frac{13}{9})$ when $N=3$.
This was improved 
in \cite{W-2024-JDE} 
for $N=2$ 
in the sense that 
the system \eqref{DD} admits 
a bounded global weak solution, 
provided that 
$\alpha \in (1,2)$, or 
$\alpha = 2$ 
with smallness assumptions 
on initial data. 
Recently, 
the smallness assumptions 
on initial data 
and 
convexity assumption on 
the domains
have been removed 
under the condition
$\int_\Omega \log n_0(x) \,dx > - \infty$ 
in \cite{ZL-2026-MMMAS}. 
We refer to 
\cite{W-2024-NNDEA}, \cite{V-2016-JMAA}, \cite{V-2017-NARWA},
\cite{MVY-2025-NA}, \cite{C-2025-JMAA}, \cite{FFV-2025-AAM} 
and \cite{TW-2023-EECT}
for related studies on other chemotaxis systems.

Nevertheless, all these studies are concerned with situations 
in which organisms have no volumes, 
although organisms have nonzero volumes in real-world settings. 
A seminal modeling approach in this direction was pursued in \cite{PH-2002-CAMQ}, where they introduced the system 
\begin{align}\label{VF1}
    \begin{cases}
        u_t = \nabla \cdot (D(u) \nabla u 
                         - h(u,v) \nabla v)
                                    + f(u,v),
        \\
        v_t = \Delta v + g(u,v)
    \end{cases}
\end{align}
with {\it volume-filling effects}, namely,
the functions $D$ and $h$ take the form
\begin{align}\label{Dhq}
    D(u) = q(u) - q'(u)u, \quad h(u,v) = uq(u)\chi(v),
\end{align}
and the function $q$ satisfies $q(1)=0$ and $q(r) \ge 0$ for all $r \in [0, 1)$.
We note that if $q$ is decreasing, then the quantity $D(u)$ in \eqref{Dhq} is positive for all $0\le u \le 1$, and hence the diffusion mechanism in the first equation from \eqref{VF1} remains nondegenerate.
As for the nondegenerate setting, 
in \cite{HP-2001-AAM} 
it was shown that 
the system \eqref{VF1} with 
$D\equiv 1$ and $f \equiv 0$ 
possesses 
a unique global solution 
when   
$h$ satisfies \eqref{Dhq}, and 
$g$ has the form 
$g(u,v) = g_1(u,v)u - g_2(u,v)v$ 
with $g_1 \ge 0$ and $g_2 \ge \eta > 0$.
This was generalized in \cite{W-2004-NA}, 
where 
existence and uniqueness of global classical solutions to \eqref{VF1} 
were proved  
when $D$ fulfills \eqref{Dhq} and 
has
a positive lower bound, and $f$ depends only on $u$.
Moreover, when $g(u,v) = \gamma u - \beta v$ $(\gamma, \beta > 0)$ and $f \equiv 0$,
the Lyapunov functional for \eqref{VF1} was discovered in \cite{W-2006-PRSESA}, and large-time behavior of classical solutions was discussed in the case that $q(u) = 1-u$.
Furthermore, 
under the same assumptions,
it was proved in \cite{JZ-2009-AA} that for all initial data $(u_0, v_0) \in \big(W^{1,p}(\Omega)\big)^2$ with $p > \max\{N, 2\}$,
there exists a stationary solution
$(U,V) \in \big(C^{2+\theta}
(\overline{\Omega})\big)^2$
of \eqref{VF1} 
with some $\theta \in (0,1)$
such that the corresponding global classical solution $(u,v)$ converges to $(U,V)$ in $\big(W^{1,p}(\Omega)\big)^2$ as $t \to \infty$.
We refer to \cite{DR-2008-N, W-2010-NA, WWW-2011-N} for related works on whole spaces, or regarding the singular diffusion, or in a parabolic--elliptic variant.

In contrast to the nondegenerate case,
uniqueness or large-time behavior of solutions to
the degenerate chemotaxis systems with volume-filling effects seems much less 
understood.
In \cite{LW-2005-PNDEA}, they considered the system
\begin{align}\label{VF2}
    \begin{cases}
        u_t = \nabla \cdot (D(u) \nabla u 
                         - h(u) \nabla v),
        \\
        v_t = \Delta v + g(u,v)
    \end{cases}
\end{align}
with $D(1)=0$ and $h(0)=h(1)=0$,
and showed existence and uniqueness of global weak solutions to \eqref{VF2} under appropriate assumptions on $D$, $h$ and $g$.
They moreover constructed flat-hump-shaped and spike-shaped stationary solutions.
After that, the case $D(0) = D(1) = h(0) = h(1) = 0$ was treated in \cite{BKU-2007-MMMAS}, and existence and local H\"{o}lder continuity of global weak solutions were established,
whereas uniqueness of solutions is still unknown.
A similar result was proved in \cite{BBRU-2009-MMAS} for a related problem involving $p$-Laplace type diffusion.
Recently, 
in \cite{SY1}, 
the authors have considered the degenerate 
system 
\begin{align}\label{Sys:SY1}
    \begin{cases}
        u_t = \nabla \cdot (D(u,v) \nabla u 
                         - h(u,v) \nabla v),
        \\
        v_t = \Delta v + g(u,v)
    \end{cases}
\end{align}
with volume-filling effects, 
namely, 
$D(1,\cdot)=0$ and $h(0,\cdot)=h(1,\cdot)=0$, 
and 
similar results as in \cite{LW-2005-PNDEA} have been proved. 
We refer to \cite{W-2010-MMNP, IS-2014-CMA, CIST-2020-DDSSB, SCSS-2025-NARWA} for further discussions.

However, 
to the best of our knowledge,
there is no study on volume-filling chemotaxis
systems with 
source terms.
Thus, the following question naturally arises. 
\begin{center}
{\it
Is it possible to show existence and uniqueness of global weak solutions
\\
when a source term is added to the first equation in \eqref{Sys:SY1}?
}
\end{center}

The purpose of this paper is 
to give an answer to this question by proving
existence and uniqueness of global weak solutions to the degenerate volume-filling 
chemotaxis system with source terms,
\begin{align}\label{Sys:Main} 
    \begin{cases}
        u_t = \nabla \cdot (D(u,v) \nabla u 
                         - h(u,v) \nabla v) + f(x,u,v),
        &  x\in \Omega, \  t>0,
        \\
        v_t = \Delta v + g(u,v),
        &  x\in \Omega, \  t>0,
        \\
        (D(u,v) \nabla u - h(u,v) \nabla v) \cdot \nu = \nabla v \cdot \nu = 0,
        &  x\in \partial\Omega, \  t>0,
        \\
        u(x, 0) = u_0(x), \ v(x, 0) = v_0(x),
        &  x\in \Omega
    \end{cases}
\end{align}
in a smoothly bounded domain $\Omega \subset \R^N$ $(N \in \mathbb{N})$,
where $\nu$ is the outward normal vector to $\pa\Omega$, where
the functions $D$, $h$, $f$ and $g$ 
fulfill the following conditions:
\begin{align} \label{Con}
	\begin{cases}
		D, h, g \in C^2([0,1] \times [0,\infty)), \quad
		f        \in C^2(\overline{\Omega} \times [0,1] \times [0,\infty)),
		\\[1mm]
		D(1,\cdot) = 0, \quad 
		h(0,\cdot) = h(1,\cdot) = 0, \quad
		f(\cdot,0,\cdot) = f(\cdot,1,\cdot) = 0,
		\\[1mm]
		\exists \, D_0 \in C^1([0,1]) \quad \mbox{s.t.} 
         \quad D(r,s) \ge D_0(r) > 0 \quad 
         \forall \, (r,s) \in [0,1) \times [0,\infty), 
         \\[1mm]
        g(\cdot ,0) \ge 0 \quad \mbox{and} 
          \quad \exists\, \kappa > 0 \quad \mbox{s.t.} 
          \quad g_s(r,s) \le \kappa 
          \quad \forall\, (r,s) \in [0,1] \times [0,\infty).
	\end{cases}
\end{align}
Examples of $D$ and $h$ 
satisfying \eqref{Con} 
can be given by 
$D(r,s)=(1-r)^2 (s+1)$ 
and 
$h(r,s)=r (1-r) s$, 
and 
$f(r,s)$ admits
the logistic type functions 
$r(1-r)$ and $|x|r(1-r)s$. 
Also, 
$g(r,s)$ can be taken 
as the production type $-s+r$ 
and 
the consumption type $-rs$. 
As for the initial data,  
the functions $u_0$ and $v_0$ satisfy
\begin{align}
\label{Con:ini} 
     \begin{cases}
          u_0 \in L^{\infty}(\Omega) 
          \quad \mbox{with} \quad 0 \le u_0 \le 1
          \ \mbox{a.e.\ in} \ \Omega, 
          \\
          v_0 \in L^{\infty}(\Omega) \cap H^{2}(\Omega)
          \quad \mbox{with} \quad \nabla v_0 \cdot \nu|_{\pa\Omega} = 0
          \quad \mbox{and} 
          \quad v_0 \ge 0 \ \mbox{a.e.\ in} \ \Omega.
    \end{cases}
\end{align}
Then the first main result reads as follows.
\begin{thm}[Existence of global weak solutions] \label{Thm}
Assume that 
\eqref{Con} and \eqref{Con:ini} hold.
Then one can find functions
\begin{align*} 
    \begin{cases}
          u \in C_{\mathrm{w}}([0,\infty);L^2(\Omega)) \cap
                      L^\infty(\Omega \times (0,\infty))
          \quad \mbox{and} \quad            
          \\
          v \in C([0,\infty);L^2(\Omega)) \cap
                      L^{2}_{\mathrm{loc}}([0,\infty);H^2(\Omega)) \cap
                 L^{\infty}_{\mathrm{loc}}(\overline{\Omega} \times [0,\infty))
    \end{cases}
\end{align*}  
with the properties that
$u\le 1$ a.e.\ in $\overline{\Omega} \times [0, \infty)$, that 
\begin{align*} 
    \begin{cases}
          \mathscr{D}(u,v) \in L^{2}_{\mathrm{loc}}([0,\infty);H^1(\Omega))
          \quad \mbox{and} \quad            
          \\
          \mathscr{D}_s(u,v) \nabla v 
              \in L^{2}_{\mathrm{loc}}([0,\infty);(L^2(\Omega))^N),
    \end{cases}
\end{align*}
and that $(u,v)$ forms a global weak solution of \eqref{Sys:Main}
in the sense of Definition \ref{Def:WS}, 
where 
\begin{align} \label{Def:hanaD}
\mathscr{D}(r,s) := \int_0^r D(\sigma,s) \, d\sigma
\quad \text{for  $r \in [0,1]$ and $s \in [0, \infty)$.}
\end{align}
\end{thm}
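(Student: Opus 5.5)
We will construct the solution as a limit of solutions to nondegenerate approximate problems. For $\ep\in(0,1)$ replace $D$ by $D_\ep(r,s):=D(r,s)+\ep$, smooth the initial data to $u_{0\ep}\in C^\infty(\overline\Omega)$ with $0\le u_{0\ep}\le 1$ (say $u_{0\ep}\in[\ep,1-\ep]$) and $u_{0\ep}\to u_0$ in $L^2(\Omega)$, and $v_{0\ep}\to v_0$ in $H^2(\Omega)$ with $\nabla v_{0\ep}\cdot\nu=0$. After extending $D,h,f,g$ suitably outside $[0,1]$ in the first variable (e.g.\ $h$, $f$ by zero), standard parabolic theory gives local classical solutions $(u_\ep,v_\ep)$.

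The first step is the invariance $0\le u_\ep\le1$. For this we use a comparison argument. Because $h(0,\cdot)=h(1,\cdot)=0$ and $f(\cdot,0,\cdot)=f(\cdot,1,\cdot)=0$, the constants $0$ and $1$ are sub- and supersolutions of the $u$-equation for given $v_\ep$. More concretely, we test with $(u_\ep-1)_+$ and $(u_\ep)_-$. We write $h(u,v)=h(u,v)-h(1,v)$ and $f=f-f(\cdot,1,\cdot)$, use the Lipschitz bounds of $h$, $f$ (valid as long as $v_\ep$ is locally bounded), absorb the cross term with $\ep|\nabla(u-1)_+|^2$, and close with Gronwall.

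Next we bound $v_\ep$. Since $g(\cdot,0)\ge0$, the comparison principle gives $v_\ep\ge0$. Since $g_s\le\kappa$ and $g$ is bounded on $[0,1]\times\{0\}$, we have $g(u,v)\le C+\kappa v$, so $v_\ep\le (\|v_0\|_\infty+Ct)e^{\kappa t}$. These bounds are uniform in $\ep$ and local in time, and they give global existence of the approximate solutions.

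The second step collects uniform estimates. Testing the $v$-equation with $-\Delta v_\ep$ and using the $H^2$ data, we get $v_\ep$ bounded in $L^\infty(0,T;H^1)\cap L^2(0,T;H^2)$ and $v_{\ep t}$ bounded in $L^2(Q_T)$. Testing the $u$-equation with $u_\ep$ gives
\[
\tfrac12\tfrac{d}{dt}\int u_\ep^2+\int (D+\ep)|\nabla u_\ep|^2\le \int h\nabla u_\ep\cdot\nabla v_\ep+C.
\]
We would like to use $D\ge D_0>0$ here, but $D_0$ may vanish at $r=1$, so this gives control only of $\int D|\nabla u_\ep|^2$ and not of $\nabla u_\ep$ itself. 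We therefore do \emph{not} aim for $\nabla u_\ep$ bounds. Instead we bound $\int D|\nabla u|^2$ after handling the chemotactic term via Young's inequality, $|h\nabla u\cdot\nabla v|\le \tfrac12 D|\nabla u|^2+\tfrac{h^2}{2D}|\nabla v|^2$, for which we need $h^2/D$ bounded. This may fail near $r=1$, and it is the delicate point. The alternative is to work directly with $w_\ep:=\mathscr D(u_\ep,v_\ep)$: we have $\nabla w_\ep=D\nabla u_\ep+\mathscr D_s\nabla v_\ep$, and testing the $u$-equation with $w_\ep$ (or with $\mathscr D(u_\ep,v_\ep)$ plus correction terms involving $v_{\ep t}$) yields an $L^2(0,T;H^1)$ bound on $w_\ep$, using only the boundedness of $D$, $h$, $\mathscr D_s$ and the bounds on $v_\ep$. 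From the equation we also get $u_{\ep t}$ bounded in $L^2(0,T;(H^1)^*)$.

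The third step, compactness and passage to the limit, is the main obstacle, because there is no gradient bound on $u_\ep$. Our plan is as follows. Since $v_\ep\to v$ strongly in $L^2(0,T;H^1)$ by Aubin--Lions, and $w_\ep$ is bounded in $L^2H^1$, we derive strong convergence of $w_\ep$ via a time-translate (Kruzhkov/Alt--Luckhaus type) estimate: $\int_0^{T-\tau}\!\int (u_\ep(t+\tau)-u_\ep(t))(w_\ep(t+\tau)-w_\ep(t))\le C\tau$, obtained from the weak formulation. Since $r\mapsto\mathscr D(r,s)$ is strictly increasing, thanks to $D\ge D_0>0$ on $[0,1)$, this gives strong $L^1$ convergence of $\mathscr D(u_\ep,v)$ and hence a.e.\ convergence of $u_\ep$ after inverting $\mathscr D(\cdot,s)$, with $v_\ep\to v$ a.e.\ controlling the $s$-dependence. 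Then $h(u_\ep,v_\ep)\nabla v_\ep\to h(u,v)\nabla v$, $f\to f$, $g\to g$, $\ep\nabla u_\ep=\sqrt\ep\cdot\sqrt\ep\nabla u_\ep\to0$ (using the $\ep|\nabla u_\ep|^2$ bound), and $D\nabla u_\ep=\nabla w_\ep-\mathscr D_s\nabla v_\ep$ converges weakly. This lets us pass to the limit in the weak formulation of Definition \ref{Def:WS}. Finally, weak continuity $u\in C_{\rm w}([0,\infty);L^2)$ follows from the $L^\infty$ bound together with the $(H^1)^*$ bound on $u_t$, and a diagonal argument in $T$ yields the global solution.
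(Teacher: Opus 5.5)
Your proposal is correct. Up to the compactness step it follows the paper. You use the same regularization $D+\ep$ and obtain invariance of $[0,1]$ by testing with $(u_\ep)_-$ and $(u_\ep-1)_+$. The key estimate comes, as in the paper (via $\frac{d}{dt}\int_\Omega\widetilde{\mathscr D^\ep}(u_\ep,v_\ep)\,dx$), from testing with $\mathscr D^\ep(u_\ep,v_\ep)$ and paying a $v_{\ep t}$ correction. This bounds $D^\ep\nabla u_\ep$ and $\nabla[\mathscr D^\ep(u_\ep,v_\ep)]$ in $L^2$, and $u_{\ep t}$ in $L^2(0,T;(H^1(\Omega))')$.

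\textbf{Where the routes differ: strong convergence of $u_\ep$.} The paper exploits the $s$-independent minorant $D_0\in C^1([0,1])$. Since $|D^\ep\nabla u_\ep|\ge|\nabla[\mathscr D_0(u_\ep)]|$, the function $Q(u_\ep)=\int_0^{u_\ep}D_0^2$ is bounded in $L^2(0,T;H^1(\Omega))$. The identity $\langle (Q(u_\ep))_t,\varphi\rangle=\langle u_{\ep t},D_0^2(u_\ep)\varphi\rangle$ bounds its time derivative in $L^1(0,T;(W^{1,N+1}(\Omega))')$. Aubin--Lions then applies to a quantity not involving $v_\ep$, and strict monotonicity of $Q$ gives compactness of $u_\ep$.

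Your Alt--Luckhaus route works with $w_\ep=\mathscr D(u_\ep,v_\ep)$ directly, so you must handle the $s$-dependence by hand. Split $\mathscr D(u_1,v_1)-\mathscr D(u_2,v_2)$ at $\mathscr D(u_2,v_1)$. On the first piece use $(r_1-r_2)(\mathscr D(r_1,s)-\mathscr D(r_2,s))\ge\|D\|_{L^\infty}^{-1}(\mathscr D(r_1,s)-\mathscr D(r_2,s))^2$. On the second use $\|\mathscr D_s\|_{L^\infty}\int_0^{T-\tau}\!\int_\Omega|v_\ep(t+\tau)-v_\ep(t)|\,dx\,dt\le C\tau$. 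Together these give $\int_0^{T-\tau}\|w_\ep(t+\tau)-w_\ep(t)\|_{L^2(\Omega)}^2\,dt\le C\tau$. Then conclude with Simon's compactness theorem and inversion of $(r,s)\mapsto(\mathscr D(r,s),s)$. Note that this step really uses the Lipschitz bound on $\mathscr D(\cdot,s)$; strict monotonicity only enters at the inversion.

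What each route buys: yours needs only positivity of $D(\cdot,s)$ on $[0,1)$, not a $C^1$, $s$-uniform lower bound. The paper's is shorter and avoids the translate computations entirely.

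\textbf{A small inaccuracy.} As written, you have no bound on $\ep\int_0^T\!\int_\Omega|\nabla u_\ep|^2$; testing with $\mathscr D^\ep$ only gives $\ep\nabla u_\ep$ bounded in $L^2$. None is needed, however: since $\ep u_\ep\to0$ in $L^2$, this forces $\ep\nabla u_\ep\rightharpoonup0$. The paper absorbs this term by passing to the limit in $\nabla[\mathscr D^\ep(u_\ep,v_\ep)]=\nabla[\mathscr D(u_\ep,v_\ep)+\ep u_\ep]$.

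Incidentally, the $h^2/D$ obstacle in your energy test with $u_\ep$ can be removed. Let $H(r,s)=\int_0^r h(\sigma,s)\,d\sigma$ and write $h\nabla u_\ep\cdot\nabla v_\ep=\nabla[H(u_\ep,v_\ep)]\cdot\nabla v_\ep-H_s|\nabla v_\ep|^2$. Integrating by parts against $\Delta v_\ep\in L^2$ then gives a uniform bound on $\int_0^T\!\int_\Omega (D+\ep)|\nabla u_\ep|^2$.
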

In order to give uniqueness 
of global weak solutions to \eqref{Sys:Main}
when 
$D(r,s) = D(r)$
for all $r\in[0,1]$ and $s\in[0,\infty)$ 
we suppose that
\begin{align} \label{SCon}
    \begin{cases}
          \forall\, K > 0 \quad \exists\, C_0, C_1 > 0 \quad \mbox{s.t.} 
               \quad \forall\, (r_1, s_1), (r_2, s_2) \in [0,1] \times [0,K] 
               \\
           \quad    (h(r_1, s_1) - h(r_2, s_2))^2 
               \le C_0 (r_1 - r_2) (\mathscr{D}(r_1) - \mathscr{D}(r_2)) 
                     + C_1 (s_1 - s_2)^2, \\ 
\quad \mbox{where} \ \mathscr{D}(r) := \int_0^r D(\sigma) \, d\sigma \ \mbox{for} \ r \in [0,1],
              \\[1mm] 
          \forall\, K > 0 \quad  \exists\, C_2, C_3 > 0 \quad \mbox{s.t.} 
               \quad \forall \, x \in \overline{\Omega} \quad
               \forall\, (r_1, s_1), (r_2, s_2) \in [0,1] 
                                                         \times [0,K] 
               \\ 
       \quad    (f(x, r_1, s_1) - f(x, r_2, s_2))^2 
               \le C_2 (r_1 - r_2) (\mathscr{D}(r_1) - \mathscr{D}(r_2)) 
                     + C_3 (s_1 - s_2)^2,
              \\[1mm]
         \exists\, C_4 > 0 \quad \exists\, g_1, g_2 \in C^2 ([0,
          \infty)) 
               \ \mbox{with}  \ 
               g_1(0) \ge 0, \  g_2(0) \ge 0 \quad \mbox{s.t.}
               \\
         \quad   g(r,s) = g_1(s) + r g_2(s), 
               \ \, \max \{g_1'(s), g_2'(s)\} \le C_4 
               \quad \forall \, (r,s) \in [0,1] 
                                                 \times [0,\infty).\!\!
     \end{cases}
\end{align}
For example, 
the functions 
$D(r)=(1-r)^2$, 
$h(r,s)=r(1-r)^2 s$ and 
$f(x,r,s)=|x| r(1-r)^2 s$ 
fulfill 
\eqref{SCon}
as well as
\eqref{Con}. 
Also, 
both the production type $g(r,s)=-r+s$ 
and 
consumption type $g(r,s)=-rs$ 
satisfy 
\eqref{SCon}.
Then the second main result reads as follows.
\begin{thm}[Uniqueness of global weak solutions] \label{Thm2}
In addition to Theorem \ref{Thm}, let $D(r,s) = D(r)$
for all $r\in[0,1]$ and $s\in[0,\infty)$, 
$v_0 \in W^{2,p}(\Omega)$ with $p>N$, and 
let \eqref{SCon} hold.
Then
the global weak solution $(u,v)$ of \eqref{Sys:Main} 
in the sense of Definition \ref{Def:WS} 
is unique.
\end{thm}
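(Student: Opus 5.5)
Uniqueness will follow from a duality ($H^{-1}$-type) estimate for the difference of the $u$-components, coupled with an $L^2$ estimate for the $v$-components. Let $(u_1,v_1)$ and $(u_2,v_2)$ be two global weak solutions in the sense of Definition \ref{Def:WS} with the same data. Fix $T>0$ and set $u:=u_1-u_2$, $v:=v_1-v_2$. Let $\psi(\cdot,t)$ solve the Neumann problem $-\Delta\psi+\psi=u(\cdot,t)$; testing with $\psi$ avoids the degeneracy at $u=1$. Testing the difference of the first equations with $\psi$ gives, formally,
\begin{align*}
\frac12\frac{d}{dt}\big(\|\nabla\psi\|_{L^2}^2+\|\psi\|_{L^2}^2\big)
= -\int_\Omega (\mathscr{D}(u_1)-\mathscr{D}(u_2))\,u
 +\int_\Omega (\mathscr{D}(u_1)-\mathscr{D}(u_2))\,\psi
 +\int_\Omega (h(u_1,v_1)\nabla v_1-h(u_2,v_2)\nabla v_2)\cdot\nabla\psi
 +\int_\Omega (f(\cdot,u_1,v_1)-f(\cdot,u_2,v_2))\psi .
\end{align*}
Here $D(r,s)=D(r)$ is used, since then $\nabla\cdot(D\nabla u_i)=\Delta\mathscr{D}(u_i)$. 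This computation must be justified through the weak formulation, using a Steklov average in time and the regularity $\mathscr{D}(u_i)\in L^2_{\rm loc}H^1$.

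The first term is the good term $-I:=-\int(u_1-u_2)(\mathscr{D}(u_1)-\mathscr{D}(u_2))\le0$, since $\mathscr{D}$ is increasing. For the second term we use $|\mathscr{D}(u_1)-\mathscr{D}(u_2)|^2\le \|D\|_\infty\, u\,(\mathscr{D}(u_1)-\mathscr{D}(u_2))$, and Young's inequality then absorbs it into $\frac14 I$ plus $C\|\psi\|^2$.

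For the taxis term we split it as
\[
(h(u_1,v_1)-h(u_2,v_2))\nabla v_1+h(u_2,v_2)\nabla v .
\]
The first piece requires $\nabla v_1\in L^\infty(\Omega\times(0,T))$. This is where the assumption $v_0\in W^{2,p}$ with $p>N$ enters: since $g(u_1,v_1)$ is bounded and $v_1$ is locally bounded, maximal regularity / semigroup estimates for the $v$-equation give $v_1\in L^\infty(0,T;W^{1,\infty})$. The first piece is then bounded by
\[
\|\nabla v_1\|_\infty\Big(\frac{1}{\ep}\int(h_1-h_2)^2+\ep\|\nabla\psi\|^2\Big),
\]
and by \eqref{SCon}, with $K=\max_i\|v_i\|_{L^\infty(\Omega\times(0,T))}$, we have $\int(h_1-h_2)^2\le C_0 I+C_1\|v\|^2$. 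This is not quite right as written, since we need a small constant in front of $I$. So I would instead apply Young's inequality in the form $\eta\int(h_1-h_2)^2+\frac{C}{\eta}\|\nabla\psi\|^2$ with $\eta C_0\|\nabla v_1\|_\infty\le\frac14$. The second piece is bounded by $C(\|\nabla v\|^2+\|\nabla\psi\|^2)$, and the $f$-term is handled exactly like the $h$-term via the second condition in \eqref{SCon}.

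For the $v$-equation we test the difference of the second equations with $v$. The linear structure $g=g_1(s)+rg_2(s)$ gives
\[
g(u_1,v_1)-g(u_2,v_2)=(g_1(v_1)-g_1(v_2))+u_1(g_2(v_1)-g_2(v_2))+g_2(v_2)\,u .
\]
The first two parts are bounded by $C|v|$, using $g_1,g_2\in C^2$ on $[0,K]$. The last part gives
\[
\int g_2(v_2)\,u\,v=\int(-\Delta\psi+\psi)\,g_2(v_2)\,v .
\]
Integrating by parts, this is bounded by
\[
C\|\nabla\psi\|\big(\|\nabla v\|+\|v\,\nabla v_2\|\big)+C\|\psi\|\|v\|,
\]
which is fine again because $\nabla v_2\in L^\infty$. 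Hence
\[
\frac12\frac{d}{dt}\|v\|^2+\|\nabla v\|^2\le\frac12\|\nabla v\|^2+C\big(\|v\|^2+\|\psi\|_{H^1}^2\big).
\]

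Adding a large multiple of the $v$-estimate to the $\psi$-estimate absorbs the $\|\nabla v\|^2$ from the taxis term. We arrive at
\[
y'\le Cy,\qquad y:=\|\psi\|_{H^1}^2+M\|v\|^2,\qquad y(0)=0,
\]
and Gronwall's lemma gives $y\equiv0$ on $(0,T)$. Therefore $u_1=u_2$ and $v_1=v_2$.

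The main obstacle is twofold. First, the Lipschitz bound for $\nabla v_i$ must be established for arbitrary weak solutions, not only for the constructed one; I would bootstrap it from $u_i\in L^\infty$ and $v_i\in L^\infty_{\rm loc}$ via Neumann heat-semigroup $L^p$–$W^{1,\infty}$ estimates, together with $v_0\in W^{2,p}\hookrightarrow W^{1,\infty}$. Second, the time derivative of $\|\psi\|_{H^1}^2$ must be justified when $u$ is only in $C_{\rm w}([0,\infty);L^2)$.
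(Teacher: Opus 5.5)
Your proposal is correct and follows essentially the same route as the paper. You test with $(-\Delta+1)^{-1}(u_1-u_2)$ under Neumann conditions, which is precisely the device that removes the need for mass conservation. You split the taxis term in the same way and use the same bound $(\mathscr{D}(u_1)-\mathscr{D}(u_2))^2\le\|D\|_{L^\infty((0,1))}(u_1-u_2)(\mathscr{D}(u_1)-\mathscr{D}(u_2))$. You then invoke the conditions in \eqref{SCon} for $h$, $f$ and $g$, and close with Gronwall's lemma for $\|\psi\|_{H^1(\Omega)}^2+\|v_1-v_2\|_{L^2(\Omega)}^2$.

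The differences are cosmetic. You use $\nabla v_i\in L^\infty(\Omega\times(0,T))$ to fix a time-independent Young parameter. The paper needs only $\nabla v\in L^2((0,T);L^\infty(\Omega))$ and puts $\|\nabla v\|_{L^\infty(\Omega)}^2$ into Gronwall's lemma as a time-dependent weight. You also absorb $\|\nabla(v_1-v_2)\|_{L^2(\Omega)}^2$ by adding a large multiple of the $v$-estimate, whereas the paper uses a small parameter $\delta<1$. Finally, you write out the $g$-difference estimate explicitly via $g(r,s)=g_1(s)+rg_2(s)$, where the paper only says it argues similarly.
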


\begin{remark}\label{rmk}
In \cite{LW-2005-PNDEA} and 
\cite{SY1},
the mass conservation property was required in order to prove uniqueness of global weak solutions, which is not applicable 
to our study.
In contrast, we will follow another argument by choosing a different test function in the weak formulation of the first equation in \eqref{Sys:Main}, so that the proof of Theorem \ref{Thm2} does not necessitate the mass conservation property.
The detailed discussion will be presented
in Section \ref{Sec:GWSU}.
\end{remark}

\noindent
{\bf Key idea of the proof.}
The proof of Theorem \ref{Thm} 
will be proceeded 
by showing that a limit function $(u, v)$ 
of approximate solutions $(u^\ep, v^\ep)$ to \eqref{Sys:Main}
is a global weak solution. 
We need some uniform estimates 
for approximate solutions,  
and
especially, it is important to obtain uniform estimates 
for $f(x, u^\ep, v^\ep)$. 

As mentioned in Remark \ref{rmk}, 
one cannot prove Theorem \ref{Thm2}
in a similar way 
as in previous studies 
(\cite{LW-2005-PNDEA}, \cite{SY1}). 
In our study, 
the mass conservation property
\begin{align} \label{MCP}
\int_\Omega u(x,t) \, dx = \int_\Omega u_0(x) \, dx 
\quad \forall \, t \ge 0,
\end{align}
which was fulfilled in \cite{LW-2005-PNDEA} and \cite{SY1}, 
does not hold 
since there is a source term $f$ in the first equation in \eqref{Sys:Main}. 
In \cite{LW-2005-PNDEA} and \cite{SY1}, 
they set $U:=u-\widehat{u}$ 
for two weak solutions $(u, v)$ and $(\widehat{u}, \widehat{v})$ and 
they chose $\varphi$ 
as a test function of the weak formulation of the first equation, 
where 
$\varphi$ is a unique solution
of the problem
\begin{equation}\label{P'}
\begin{cases}
  -\Delta\varphi = U,  &x\in\Omega, \ t>0, \\
  \nabla\varphi \cdot \nu = 0,   &x\in \partial \Omega, \ t>0
\end{cases}
\end{equation}
with
$\int_{\Omega} \varphi \, dx = 0$.
In order to guarantee 
the existence of $\varphi$, 
we need the mass conservation property \eqref{MCP}
for $u$ and $\widehat{u}$.
However, we cannot rely on \eqref{P'} 
since the mass conservation property 
does not hold in our study. 
Therefore, we turn our eyes to
the problem
\begin{equation*}
\begin{cases}
  -\Delta\varphi +\varphi = U,  &x\in\Omega, \ t>0, \\
  \nabla\varphi \cdot \nu = 0,   &x\in \partial \Omega, \ t>0,
\end{cases}
\end{equation*}
which admits 
a unique solution 
even when 
$u$ and $\widehat{u}$ do not 
satisfy \eqref{MCP}.
Choosing the solution $\varphi$
as a test function in the weak formulation,
we observe that $U=u-\widehat{u}=0$, 
which concludes uniqueness.

\medskip
\noindent
{\bf Organization of this paper.}
The remainder of this paper is organized as follows. 
In Section \ref{Sec:Prelimi}, we estimate 
approximate solutions  
of the problem \eqref{Sys:Main}. 
Section \ref{Sec:GWS} is devoted to the proof 
of Theorem \ref{Thm}.
Finally,
we prove Theorem \ref{Thm2}
in Section \ref{Sec:GWSU}.
\section{Preliminaries} \label{Sec:Prelimi} 
In this section we assume \eqref{Con} and  
\eqref{Con:ini}.
As a first step within our procedure,
we construct approximate solutions 
of \eqref{Sys:Main}.
Fix $\ep \in (0,1)$ and let
\begin{equation} \label{Dep} 
D^\ep(r,s) := D(r,s) + \ep, \quad (r,s) \in [0,1] \times [0,\infty).
\end{equation} 
Also, in view of \eqref{Con:ini}, we can choose 
$(u_0^\ep,v_0^\ep) \in \big(C_{\textrm{c}}^{\infty}(\Omega)\big)^2$ 
such that $0 \le u_0^\ep \le 1$  
and $v_0^\ep \ge 0$ in $\Omega$, 
and that
\begin{equation} \label{111}
\begin{cases}
\| v_0^\ep \|_{L^\infty(\Omega)} \le \| v_0 \|_{L^\infty(\Omega)}+1 
\quad \mbox{and} \quad 
\| v_0^\ep \|_{H^{2}(\Omega)} \le \| v_0 \|_{H^{2}(\Omega)}+1, 
\\
\| u_0^\ep - u_0 \|_{L^2(\Omega)} + \| v_0^\ep - v_0 \|_{L^2(\Omega)} \le \ep.
\end{cases}
\end{equation}
From now on, 
we let $D^{\ep}$, $h$, $f$ and $g$ 
denote 
$C^2$-extensions of themselves.
Then 
there exists a uniquely determined maximal classical solution
\begin{align} \label{kotennkai}
    (u^\ep, v^\ep) \in \big(C(\overline{\Omega} 
                                                  \times [0, T_*)) \cap
                       C^{2,1}(\overline{\Omega} \times (0, T_*))
                       \big)^2
\quad \mbox{with} \ T_* \in (0, \infty]
\end{align}
of the problem
\begin{align}\label{Sys:Mainkinnji} 
    \begin{cases}
        (u^\ep)_t = \nabla \cdot (D^\ep(u^\ep,v^\ep) \nabla u^\ep 
                         - h(u^\ep,v^\ep) \nabla v^\ep) + f(x,u^\ep,v^\ep),
        & x\in \Omega, \  t>0,
        \\
        (v^\ep)_t = \Delta v^\ep + g(u^\ep,v^\ep),
        &  x\in \Omega, \  t>0,
        \\
        (D^\ep(u^\ep,v^\ep) \nabla u^\ep - h(u^\ep,v^\ep) \nabla v^\ep) 
        \cdot \nu = 
        \nabla v^\ep \cdot \nu = 0,
        &  x\in \partial\Omega, \  t>0,
        \\
        u^\ep(x, 0) = u^\ep_0(x), \ v^\ep(x, 0) = v^\ep_0(x), 
        & x\in \Omega
    \end{cases}
\end{align}
by the Amann theory 
(\cite[Section 4, Theorems 14.4 and 14.6]{A-1993-TTM}). 

Let us state some basic properties of 
the unique classical solution $(u^\ep, v^\ep)$
to \eqref{Sys:Mainkinnji}. 
\begin{lem}[Global existence and pointwise estimates] 
\label{Lem:Bounds}
The pair of functions $(u^\ep, v^\ep)$ is a global classical solution to 
\eqref{Sys:Mainkinnji}, that is, $T_*=\infty$ in \eqref{kotennkai}. 
Moreover, for all $T>0$ there exists a positive constant $K(T)$ 
independent of $\ep$ such that 
\begin{align} \label{KK}
0 \le u^\ep(x,t) \le 1 \quad \mbox{and} \quad 0 \le v^\ep(x,t) \le K(T)
\quad \forall \, (x,t)\in \overline{\Omega} \times [0,T].
\end{align}
\end{lem}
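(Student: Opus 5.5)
We argue on $[0,T]$ for arbitrary $T<T_*$, first proving the pointwise bounds \eqref{KK} and then using them to rule out $T_*<\infty$ through the extensibility criterion of the Amann theory.

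\textbf{Bounds for $u^\ep$ by comparison.} We regard the first equation in \eqref{Sys:Mainkinnji} as a scalar quasilinear parabolic equation for $u^\ep$, with $v^\ep$ a given smooth coefficient on $\overline{\Omega}\times[0,T]$. Because of the $C^2$-extensions, the equation makes sense for all real $u^\ep$, and its diffusion $D^\ep$ stays positive near $[0,1]$; if necessary we choose the extension of $D^\ep$ so that $D^\ep\ge\ep/2$. The constants $\underline u\equiv0$ and $\overline u\equiv1$ are exact solutions of this equation with the no-flux condition. Indeed,
\begin{align*}
\nabla\cdot\bigl(D^\ep(c,v^\ep)\nabla c-h(c,v^\ep)\nabla v^\ep\bigr)+f(x,c,v^\ep)
=-\nabla\cdot\bigl(h(c,v^\ep)\nabla v^\ep\bigr)+f(x,c,v^\ep)=0
\quad\mbox{for } c\in\{0,1\},
\end{align*}
since $h(c,\cdot)=0$ and $f(\cdot,c,\cdot)=0$. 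The boundary flux of these constants also vanishes, so they satisfy the no-flux condition. Since $0\le u_0^\ep\le1$, the comparison principle for this Neumann problem yields $0\le u^\ep\le1$.

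To make the comparison rigorous, we test the equation for $w:=u^\ep-1$ with $w_+$ (and similarly test with $(u^\ep)_-$ for the lower bound). We expand the nonlinearities as $h(u^\ep,v^\ep)=h(u^\ep,v^\ep)-h(1,v^\ep)=a\,w$ and $f=b\,w$, where $a,b$ are bounded on $[0,T]$ by local Lipschitz continuity. The chemotactic term $\int a\,w_+\nabla v^\ep\cdot\nabla w_+$ is absorbed by Young's inequality into $\frac{\ep}{2}\int|\nabla w_+|^2$ plus $C\|\nabla v^\ep\|_\infty^2\int w_+^2$. This uses that $\nabla v^\ep$ is bounded on $[0,T]$, which holds for $T<T_*$ because the solution is classical and continuous up to $t=0$ with smooth data. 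Gronwall's inequality with $w_+(0)=0$ then gives $w_+\equiv0$.

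\textbf{Bounds for $v^\ep$.} Once $0\le u^\ep\le1$ is known, we write $g(u^\ep,v^\ep)=g(u^\ep,0)+c(x,t)v^\ep$ with $c\le\kappa$, which is possible because $g_s\le\kappa$. Since $g(\cdot,0)\ge0$, the function $v^\ep$ is a supersolution of $z_t=\Delta z+cz$, so the comparison principle gives $v^\ep\ge0$. For the upper bound, the mean value theorem gives $g(u^\ep,v^\ep)\le g(u^\ep,0)+\kappa v^\ep\le M+\kappa v^\ep$, where $M:=\max_{[0,1]}g(\cdot,0)$ is finite and independent of $\ep$ because $g$ is evaluated only for $r\in[0,1]$. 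The spatially constant function $\overline v(t):=(\|v_0\|_{L^\infty(\Omega)}+1+M/\kappa)e^{\kappa t}$ is then a supersolution, since $\overline v'=\kappa\overline v$ and $\overline v\ge M/\kappa$. Using \eqref{111}, we obtain $v^\ep\le K(T):=(\|v_0\|_{L^\infty(\Omega)}+1+M/\kappa)e^{\kappa T}$, which is independent of $\ep$.

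\textbf{Globality.} Amann's theory (\cite[Theorems 14.4, 14.6]{A-1993-TTM}) says that if $T_*<\infty$, then the solution must leave every compact subset of the phase space, for instance by blow-up of the $L^\infty$ norm or degeneration of the ellipticity. Our bounds hold with a constant depending only on $T_*$, and the ellipticity is uniform because $D^\ep\ge\ep$ on $[0,1]\times[0,K]$. Standard parabolic regularity then upgrades these $L^\infty$ bounds to Hölder bounds: De Giorgi–Nash for $u^\ep$ with bounded coefficients, and maximal regularity for $v^\ep$, with the Hölder estimates feeding back into each other. These Hölder bounds contradict the criterion, so $T_*=\infty$.

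The main obstacle is this last step. The precise extensibility criterion in Amann's framework is stated in a $W^{1,p}$-type norm, and obtaining a uniform gradient bound needs a bootstrap from the $L^\infty$ bounds; I would cite the standard argument rather than redo it. A secondary technical point is justifying the comparison near $t=0$.
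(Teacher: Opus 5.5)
Your argument is the paper's. You use the same energy tests with $(u^\ep)_-$ and $(u^\ep-1)_+$, with the vanishing of $h$ and $f$ at $r=0,1$ giving bounds linear in the test function, followed by Young absorption into the $\ep$-diffusion and Gronwall. Then come the test with $(v^\ep)_-$, an ODE comparison for the upper bound on $v^\ep$, and Amann's theory for $T_*=\infty$.

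Your explicit choice of an extension with $D^\ep\ge\ep/2$ outside $[0,1]$ fixes a point the paper passes over when it uses $D^\ep\ge\ep$ on the set where $u^\ep<0$. Likewise, in the step $v^\ep\ge0$, the bound $c\le\kappa$ is only guaranteed where $v^\ep\ge0$. The paper glosses over the same point, and boundedness of $c$ on $\overline{\Omega}\times[0,T]$ is all the comparison needs.

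One step is wrong as written. The function $\overline v(t)=(\|v_0\|_{L^\infty(\Omega)}+1+M/\kappa)e^{\kappa t}$ is not a supersolution of $z_t=\Delta z+\kappa z+M$. Indeed $\overline v'-\kappa\overline v-M=-M<0$ whenever $M>0$, and the observation $\overline v\ge M/\kappa$ does not help with exponent $\kappa$. There are three easy repairs:
\begin{itemize}
\item Take $\overline v-M/\kappa$. It solves $z'=\kappa z+M$ exactly and starts at $\|v_0\|_{L^\infty(\Omega)}+1\ge\|v_0^\ep\|_{L^\infty(\Omega)}$. It lies below your $\overline v$, so your stated $K(T)$ is still a valid bound.
\item Use $e^{2\kappa t}$ instead of $e^{\kappa t}$. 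Then $\overline v\ge M/\kappa$ is exactly what gives $\overline v'\ge\kappa\overline v+M$.
\item Do as the paper does: bound $g(u^\ep,0)+\kappa v^\ep\le c_2(v^\ep+1)$ with $c_2=\kappa+\|g(\cdot,0)\|_{L^\infty((0,1))}$, and compare $v^\ep+1$ with $(\|v_0^\ep\|_{L^\infty(\Omega)}+1)e^{c_2t}$.
\end{itemize}

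Finally, what you identify as the main obstacle is handled in the paper by a single citation, Amann's Theorem 15.5. For upper triangular systems such as \eqref{Sys:Mainkinnji}, it gives $T_*=\infty$ directly from $L^\infty$ bounds on bounded time intervals, together with uniform normal ellipticity, which here comes from $D^\ep\ge\ep$ on $[0,1]\times[0,K(T)]$. So the H\"older and gradient bootstrap you sketch is contained in that theorem, and you need not redo it.
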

\begin{proof}
For functions $w$ we let
$w_+ := \max\{ w, 0 \}$ 
and let $w_- := - \min\{w , 0 \}$.
First, we show that 
\begin{equation} \label{Con:bounds}
    0 \le u^\ep \le 1 \quad \mbox{and} \quad v^\ep \ge 0 \quad 
    \mbox{in} \ \overline{\Omega} \times [0,T_*),
\end{equation}
where $T_*$ is as in \eqref{kotennkai}.
We claim that $u^\ep \ge 0$ in $\overline{\Omega} \times [0,T_*)$. 
To see this, we test the first equation in \eqref{Sys:Mainkinnji} 
by $(u^\ep)_-$
and use the estimate $D^\ep(r,s) \ge \ep$
from \eqref{Dep} to calculate  
\begin{align}
\nonumber
    \frac{d}{dt} \int_\Omega |(u^\ep)_-|^2 \, dx
     &\le  -2\ep \int_\Omega |\nabla (u^\ep)_-|^2 \, dx 
    -2 \int_\Omega h(-(u^\ep)_-,v^\ep) \nabla v^\ep 
                                                          \cdot \nabla(u^\ep)_- \, dx
\\ \nonumber
       &\quad \,  -2 \int_\Omega f(x, -(u^\ep)_- ,v^\ep)\, (u^\ep)_- \, dx 
\\
\nonumber
&\le  -2 \ep \int_\Omega 
            |\nabla (u^\ep)_-|^2 \, dx
        +2 \int_\Omega 
            |h(-(u^\ep)_-,v^\ep) 
                       \nabla v^\ep| 
                |\nabla(u^\ep)_-| \, dx
\\ \label{*}
&\quad \,         
        +2 \int_\Omega 
            |f(x, -(u^\ep)_- ,v^\ep)|
               |(u^\ep)_-| \, dx               
\end{align}
in $(0, T_*)$.
Here, 
we use the Young inequality 
to estimate
\begin{align*}
|h(-(u^\ep)_-,v^\ep) \nabla v^\ep| 
|\nabla(u^\ep)_-| 
\le
\frac{1}{4\ep} 
|h(-(u^\ep)_-,v^\ep) \nabla v^\ep|^2 
+ \ep 
|\nabla(u^\ep)_-|^2,
\end{align*}
which along with \eqref{*} yields 
\begin{align} \nonumber 
    \frac{d}{dt} \int_\Omega |(u^\ep)_-|^2 \, dx
    &\le -2\ep \int_\Omega |\nabla (u^\ep)_-|^2 \, dx 
    + 2\ep \int_\Omega |\nabla (u^\ep)_-|^2 \, dx
    \\ \nonumber 
&\quad \, 
    + \frac{1}{2\ep} 
    \int_\Omega |h(-(u^\ep)_-,v^\ep)|^2 |\nabla v^\ep|^2 \, dx
   +2 \int_\Omega 
            |f(x, -(u^\ep)_- ,v^\ep)|
               |(u^\ep)_-| \, dx 
\\ \nonumber             
&= \frac{1}{2\ep} 
    \int_\Omega |h(-(u^\ep)_-,v^\ep)|^2 |\nabla v^\ep|^2 \, dx 
    +2 \int_\Omega 
            |f(x, -(u^\ep)_- ,v^\ep)|
               |(u^\ep)_-| \, dx
\\ \label{TUIKA1}
&=: \frac{1}{2\ep} I_1 + 2 I_2 
\end{align}
in $(0, T_*)$. 
We first focus on $I_1$. 
Then 
it follows from 
\eqref{Con} 
that 
\begin{align} \label{TUIKA2}
I_1 
= \int_\Omega 
      |h(-(u^\ep)_-,v^\ep)|^2 
               |\nabla v^\ep|^2 \, dx 
= \int_\Omega 
      |h(0,v^\ep) - h(-(u^\ep)_-,v^\ep)|^2 
                   |\nabla v^\ep|^2 \, dx.        
\end{align} 
Also, 
thanks to the mean value theorem, 
there exists 
$\xi_1 \in (-(u^\ep)_{-},0)$ 
such that 
\begin{align} \label{TUIKA3}
h(0,v^\ep) - h(-(u^\ep)_-,v^\ep) 
= h_r(\xi_1,v^\ep) (u^\ep)_{-}. 
\end{align} 
Thus 
in light of 
\eqref{TUIKA2} and \eqref{TUIKA3} 
together with  
the regularity of $h$ and $v^\ep$, 
there exists $c_1(\ep) > 0$ 
such that 
\begin{align} \label{TUIKA4}
I_1 \le c_1(\ep) 
         \int_\Omega |(u^\ep)_-|^2 \, dx. 
\end{align} 
We next estimate $I_2$. 
By \eqref{Con} 
we have $f(\cdot, 0, \cdot) = 0$, 
so that 
we observe that 
\begin{align} \label{TUIKA5}
I_2 
= \int_\Omega 
   |f(x, -(u^\ep)_- ,v^\ep)
            - f(x, 0,v^\ep)|
                  |(u^\ep)_-| \, dx.
\end{align} 
Here, 
again by the mean value theorem, 
there exists 
$\xi_2 \in (-(u^\ep)_{-}, 0)$ 
such that 
\begin{align} \label{TUIKA6}
f(x, -(u^\ep)_- ,v^\ep) - f(x, 0,v^\ep)
=- f_r(x, \xi_2,v^\ep) (u^\ep)_{-}.
\end{align} 
Collecting \eqref{TUIKA5} 
and \eqref{TUIKA6}, 
and considering the regularity 
of $f$ and $v^\ep$, 
we obtain that 
\begin{align} \label{TUIKA7}
I_2 \le c_2(\ep) 
         \int_\Omega |(u^\ep)_-|^2 \, dx.
\end{align}  
Therefore, 
by virtue of 
\eqref{TUIKA1}, \eqref{TUIKA4} 
and \eqref{TUIKA7}, 
we arrive at 
\begin{align} \label{TUIKA8}
\frac{d}{dt} \int_\Omega
                     |(u^\ep)_-|^2 \, dx 
\le 
\Big( 
\frac{c_1(\ep)}{2\ep} + 2c_2(\ep) 
\Big) 
\int_\Omega |(u^\ep)_-|^2 \, dx  
\end{align}
in $(0, T_*)$. 
Let $T \in (0, T_*)$ be arbitrary. 
Then 
owing to \eqref{TUIKA8}
along with nonnegativity 
of $u^\ep_0$, 
it follows from the Gronwall lemma that
$\int_\Omega |(u^\ep(x,t))_-|^2 \, dx = 0$ 
for all $x \in \overline{\Omega}$ and $t \in [0,T)$, 
which leads to
$(u^\ep(x,t))_-=0$ 
for all $x \in \overline{\Omega}$ and $t \in [0,T)$,
and hence 
we have $u^\ep(x,t) \ge 0$ for all $(x,t)\in \overline{\Omega} \times [0,T)$.
Since $T\in(0, T_*)$ is arbitrary, 
we conclude that
$u^\ep \ge 0$ in $\overline{\Omega} \times [0,T_*)$.
Copying the argument with $1-u^\ep$ 
instead of $u^\ep$, 
we see that $u^\ep \le 1$ in $\overline{\Omega} \times [0,T_*)$, 
which entails \eqref{Con:bounds}.
We prove that $v^\ep \ge 0$ in $\overline{\Omega} \times [0, T_*)$. 
Testing the second equation in \eqref{Sys:Mainkinnji} by $(v^\ep)_-$ 
and 
recalling that $g(r,0) \ge 0$ and $g_s(r,s) \le \kappa$ for all $(r,s) \in [0,1] \times [0, \infty)$
(see \eqref{Con}), 
we infer that 
\begin{align*}
    \frac{d}{dt} \int_\Omega |(v^\ep)_-|^2 \, dx 
    &= -2 \int_\Omega |\nabla (v^\ep)_-|^2 \, dx 
         -2 \int_\Omega g(u^\ep, v^\ep) (v^\ep)_- \, dx 
    \\
    &\le -2 \int_\Omega (g(u^\ep,v^\ep)-g(u^\ep,0))(v^\ep)_- \, dx 
    \\
    &\le 2\kappa \int_\Omega |(v^\ep)_-|^2 \, dx
\end{align*}
in $(0, T_*)$.
The Gronwall lemma 
thereby
implies that $\int_\Omega |(v^\ep(x,t))_-|^2 \, dx = 0$, which means
$v^\ep \ge 0$ 
in $\overline{\Omega} \times [0,T_*)$.

Next, we show that $T_* = \infty$. 
From
the second equation in \eqref{Sys:Mainkinnji} and 
the assumption $g_s \le \kappa$  (see \eqref{Con}),
we obtain
\begin{align*}
    (v^\ep)_t(x,t) - \Delta v^\ep(x,t)
    &\le  \kappa v^\ep(x,t) +g(u^\ep(x,t),0)
    \\
    &\le  (\kappa+\| g(\cdot,0) \|_{L^\infty((0,1))})\,(v^\ep(x,t)+1)
\end{align*}
for all $(x,t) \in \Omega \times (0,T_*)$. 
Thus, 
by virtue of
\eqref{111}, we can confirm that
\begin{equation} \label{ini:vupbound}
    v^\ep(x,t) \le (\| v_0^\ep \|_{L^\infty(\Omega)}+1) e^{c_2 t} 
    \le (\| v_0 \|_{L^\infty(\Omega)}+2) e^{c_2 t} \quad
    \forall\,(x,t) \in \overline{\Omega} \times [0,T_*),
\end{equation}
where $c_2 := \kappa+\| g(\cdot,0) \|_{L^\infty((0,1))}$. 
Owing to \cite[Theorem 15.5]{A-1993-TTM}, 
we conclude that $T_* = \infty$. 
The estimate \eqref{KK} thereby results from 
\eqref{Con:bounds} and \eqref{ini:vupbound}.
\end{proof}


To prepare our construction of a solution to \eqref{Sys:Main}, 
let us close this section by deriving some estimates 
for approximate solutions independent of $\ep \in (0,1)$.
\begin{lem}[Estimates for approximate solutions] \label{Con:kinnjikai}
For all $T>0$ there exists a positive constant $C(T)$ 
independent of $\ep$ such that 
\begin{align}
\label{Con1}
&\int_0^T \| v^\ep \|_{L^2(\Omega)}^2 \, dt 
+\int_0^T \| \Delta v^\ep \|_{L^2(\Omega)}^2 \, dt 
+ \int_0^T \| (v^\ep)_t \|_{L^2(\Omega)}^2 \, dt
\le C(T), \\
\label{Con3}
&\int_0^T \|\nabla v^\ep \|_{L^2(\Omega)}^2 \, dt \le C(T), \\
\label{Con2}
&\int_0^T \| D^\ep(u^\ep,v^\ep) \nabla u^\ep \|_{L^2(\Omega)}^2 \, dt \le C(T),
\\
\label{Con4}
&\int_0^T \| \nabla [\mathscr{D}^\ep(u^\ep,v^\ep)] \|_{L^2(\Omega)}^2 \, dt
\le C(T), \\
\label{Con5}
&\int_0^T  \| \nabla [\mathscr{D}_0(u^\ep)] \|_{L^2(\Omega)}^2 \, dt \le 
C(T),  
\\
\label{Con6}
&\int_0^T  \| (u^\ep)_t \|_{(H^1(\Omega))'}^2 \, dt \le C(T),
\end{align}
where for $(r,s) \in [0,1] \times [0, \infty)$,
\begin{align}
\label{hanaDep}
\mathscr{D}^\ep(r,s) 
&:= \int_0^r D^\ep (\sigma,s) \, d\sigma \quad \mbox{and}
\\
\label{hanaD0}
\mathscr{D}_0(r)&:= \int_0^r D_0 (\sigma) \, d\sigma.
\end{align}
\end{lem}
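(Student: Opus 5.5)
We derive the estimates in the order they are listed, with \eqref{KK} from Lemma \ref{Lem:Bounds} as the main input: on $[0,T]$ we have $0\le u^\ep\le1$ and $0\le v^\ep\le K(T)$. Hence $D^\ep$, $h$, $f$, $g$ and their first derivatives, evaluated at $(u^\ep,v^\ep)$, are bounded by constants depending only on $T$, uniformly in $\ep\in(0,1)$.

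\textbf{The $v$-estimates \eqref{Con1} and \eqref{Con3}.} The bound $\int_0^T\|v^\ep\|_{L^2}^2\,dt\le |\Omega|K(T)^2T$ is immediate. Next we test the second equation in \eqref{Sys:Mainkinnji} by $-\Delta v^\ep$ and use $|g(u^\ep,v^\ep)|\le c(T)$. Young's inequality then gives
\[
\frac{d}{dt}\int_\Omega|\nabla v^\ep|^2\,dx+\int_\Omega|\Delta v^\ep|^2\,dx\le c(T),
\]
so that
\[
\sup_{t\le T}\|\nabla v^\ep\|_{L^2}^2+\int_0^T\|\Delta v^\ep\|_{L^2}^2\,dt\le \|\nabla v_0^\ep\|_{L^2}^2+c(T)T.
\]
The right-hand side is uniform in $\ep$ by \eqref{111}, and \eqref{Con3} follows. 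Finally, $(v^\ep)_t=\Delta v^\ep+g$ gives the bound for $\int_0^T\|(v^\ep)_t\|_{L^2}^2\,dt$.

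\textbf{The $u$-estimates \eqref{Con2}, \eqref{Con4} and \eqref{Con5}.} We test the first equation in \eqref{Sys:Mainkinnji} by $\mathscr{D}^\ep(u^\ep,v^\ep)$, rather than by $u^\ep$. The point is that the time derivative of $\Phi^\ep(r,s):=\int_0^r\mathscr{D}^\ep(\sigma,s)\,d\sigma$ evaluated along the solution contains the term $\Phi^\ep_s(u^\ep,v^\ep)(v^\ep)_t$. This term is bounded by $c(T)|(v^\ep)_t|$ and is therefore integrable in time by \eqref{Con1}. Writing $\nabla[\mathscr{D}^\ep(u^\ep,v^\ep)]=D^\ep\nabla u^\ep+\mathscr{D}^\ep_s\nabla v^\ep$, the diffusion term yields
\[
\int_\Omega D^\ep\nabla u^\ep\cdot\nabla[\mathscr{D}^\ep(u^\ep,v^\ep)]\,dx
=\int_\Omega|D^\ep\nabla u^\ep|^2\,dx+\int_\Omega D^\ep\mathscr{D}^\ep_s\nabla u^\ep\cdot\nabla v^\ep\,dx .
\]
The cross term, the chemotactic term $h\nabla v^\ep\cdot\nabla[\mathscr{D}^\ep]$ and the source term $f\,\mathscr{D}^\ep$ are all controlled by $\tfrac12\int_\Omega|D^\ep\nabla u^\ep|^2\,dx+c(T)\bigl(\int_\Omega|\nabla v^\ep|^2\,dx+1\bigr)$; here $f$ is simply bounded because $u^\ep\in[0,1]$. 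Integrating over $(0,T)$ and using \eqref{Con3} together with $\Phi^\ep\ge0$ and $\Phi^\ep\le c(T)$ gives \eqref{Con2}. Then \eqref{Con4} follows from the same decomposition of $\nabla[\mathscr{D}^\ep]$ combined with \eqref{Con3}. For \eqref{Con5} we use $D_0\le D\le D^\ep$ and $D_0>0$, both from \eqref{Con}, to get
\[
|\nabla\mathscr{D}_0(u^\ep)|=D_0(u^\ep)|\nabla u^\ep|\le \frac{D_0(u^\ep)}{D^\ep(u^\ep,v^\ep)}\,|D^\ep\nabla u^\ep|\le|D^\ep\nabla u^\ep|,
\]
so \eqref{Con5} is a consequence of \eqref{Con2}.

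\textbf{The time-derivative estimate \eqref{Con6}.} For $\psi\in H^1(\Omega)$ the weak form of the first equation gives
\[
|\langle(u^\ep)_t,\psi\rangle|\le\bigl(\|D^\ep\nabla u^\ep\|_{L^2}+c(T)\|\nabla v^\ep\|_{L^2}+c(T)\bigr)\|\psi\|_{H^1}.
\]
Squaring, integrating in time and applying \eqref{Con2} and \eqref{Con3} yields \eqref{Con6}.

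The main obstacle is the derivation of \eqref{Con2}. One has to choose the test function so that the degenerate diffusion produces $|D^\ep\nabla u^\ep|^2$ exactly; the natural candidate $u^\ep$ produces only $D^\ep|\nabla u^\ep|^2$, which does not control $|D^\ep\nabla u^\ep|^2$ uniformly in $\ep$. This is why $\mathscr{D}^\ep(u^\ep,v^\ep)$ is used, and the price is that the resulting $v$-dependence must be absorbed through the estimate on $(v^\ep)_t$ in \eqref{Con1}.
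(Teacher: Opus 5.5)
Your proof is correct, and for the central estimate \eqref{Con2} it is the paper's argument. Your $\Phi^\ep$ is exactly $\widetilde{\mathscr{D}^\ep}$ from \eqref{hanaDepba-}, and you carry out by hand the bounds on $\mathcal{I}_1$ and $\mathcal{I}_2$ in \eqref{I123} that the paper imports from earlier work. Bounding $\Phi^\ep_s(u^\ep,v^\ep)(v^\ep)_t$ through $\|(v^\ep)_t\|_{L^2}$ is equivalent to the paper's substitution $(v^\ep)_t=\Delta v^\ep+g(u^\ep,v^\ep)$ followed by a bound via $\|\Delta v^\ep\|_{L^2}$.

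The real difference lies in \eqref{Con1} and \eqref{Con3}. The paper cites maximal Sobolev regularity and then interpolates $\|\nabla v^\ep\|_{L^2}^2\le\frac12\|v^\ep\|_{L^2}^2+\frac12\|\Delta v^\ep\|_{L^2}^2$, while you test with $-\Delta v^\ep$. Your version is self-contained, needs only a uniform $H^1$ bound on $v_0^\ep$, and gives the stronger bound on $\sup_{t\le T}\|\nabla v^\ep(t)\|_{L^2}$. The paper's version is just shorter on the page. Deriving \eqref{Con6} directly from \eqref{Con2} rather than via \eqref{Con4} is an inessential variation.

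Your closing remark, however, misidentifies the obstacle. $D^\ep|\nabla u^\ep|^2$ does control $|D^\ep\nabla u^\ep|^2$ uniformly in $\ep$, since $D^\ep\le c(T)$. What fails when testing by $u^\ep$ is absorbing the taxis term by Young's inequality. That costs $h^2/D^\ep$, which is not bounded uniformly as $\ep\to0$, e.g.\ for $D(r,s)=(1-r)^3$ and $h(r,s)=r(1-r)s$. Even this is avoidable. Write $h(u^\ep,v^\ep)\nabla u^\ep=\nabla[H(u^\ep,v^\ep)]-H_s(u^\ep,v^\ep)\nabla v^\ep$ with $H(r,s):=\int_0^r h(\sigma,s)\,d\sigma$, and integrate by parts against $\nabla v^\ep$ using the Neumann condition. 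This bounds the taxis term by $c(T)(\|\Delta v^\ep\|_{L^1}+\|\nabla v^\ep\|_{L^2}^2)$, which is integrable in time. So $\mathscr{D}^\ep(u^\ep,v^\ep)$ is a convenient test function, not a forced one.
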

\begin{proof}
We prove \eqref{Con1} and \eqref{Con3}. 
Since 
$\nabla v_0^\ep \cdot \nu|_{\partial\Omega} =0$, 
we can apply
the maximal Sobolev regularity 
for parabolic equations (\cite[Lemma 2.1]{IY-2020-DCDS}, \cite[3.1 Theorem]{HP-1997-CPDE}) to obtain 
\begin{align*}
&\int_0^T \| v^\ep \|_{L^2(\Omega)}^2 \, dt 
+\int_0^T \| \Delta v^\ep \|_{L^2(\Omega)}^2 \, dt 
+ \int_0^T \| (v^\ep)_t \|_{L^2(\Omega)}^2 \, dt
\\
&\le
c_1\left(
\|v_0^\ep\|_{H^2(\Omega)}^2 
+\int_0^T \|g(u^\ep, v^\ep) \|_{L^2(\Omega)}^2 \, dt\right) 
\end{align*}
with some positive constant $c_1$ 
independent of $\ep$.
Since the right-hand side is uniformly bounded 
with respect to $\ep$ due to
\eqref{Con},
\eqref{111}
and \eqref{KK}, we obtain
\eqref{Con1}. 
Also, 
since we see from the 
Schwarz inequality 
and the Young inequality that
\begin{align*}
\|\nabla v^\ep \|_{L^2(\Omega)}^2
 &=(v^\ep, -\Delta v^\ep)_{L^2(\Omega)}  \\
 &\le \| v^\ep \|_{L^2(\Omega)} \|\Delta v^\ep \|_{L^2(\Omega)} \\
 &\le 
 \frac12 \| v^\ep \|_{L^2(\Omega)}^2+
 \frac12\|\Delta v^\ep \|_{L^2(\Omega)}^2,
\end{align*}
the estimate \eqref{Con3}
results from \eqref{Con1}.

We now tackle the proof of \eqref{Con2}.
For $r \in [0,1]$ and $s \in [0, \infty)$ we set
\begin{align}
\label{hanaDepba-}
\widetilde{\mathscr{D}^\ep}(r,s) 
&:= \int_0^r \mathscr{D}^\ep (\sigma,s) \, d\sigma.
\end{align}
We infer from \eqref{hanaDep}, \eqref{Dep} and $\ep \in (0,1)$ that
\begin{align} \label{Ine:hanaDep}
   \mathscr{D}^\ep(r,s) 
    = \mathscr{D}(r,s) + \ep r
    \le \mathscr{D}(r,s) + r
    \quad \forall \, r \in [0,1] \ \forall \, s \in [0, \infty),
\end{align}
where
$\mathscr{D}(r,s) = \int_0^r D(\sigma,s) \, d\sigma$
as in \eqref{Def:hanaD}.
From the first and second equations in \eqref{Sys:Mainkinnji}
together with \eqref{hanaDepba-},
we see that
\begin{align} 
\nonumber
\frac{d}{dt} \int_\Omega \widetilde{\mathscr{D}^\ep}(u^\ep,v^\ep) \, dx
&=\int_\Omega (\widetilde{\mathscr{D}^\ep})_r(u^\ep,v^\ep)
                       \cdot(u^\ep)_t \, dx
    +\int_\Omega (\widetilde{\mathscr{D}^\ep})_s(u^\ep,v^\ep)
                        \cdot(v^\ep)_t \, dx 
\\
\nonumber
&= \int_\Omega 
          \mathscr{D}^\ep(u^\ep,v^\ep) 
          \nabla \cdot (D^\ep(u^\ep,v^\ep) \nabla u^\ep 
                                     - h(u^\ep,v^\ep) \nabla v^\ep) \, dx \\
\nonumber
&\quad \, +\int_\Omega (\widetilde{\mathscr{D}^\ep})_s(u^\ep,v^\ep)
                                           \big(\Delta v^\ep + g(u^\ep,v^\ep)\big) \, dx
                                           \\
                                           \nonumber
&\quad \, +\int_\Omega \mathscr{D}^\ep(u^\ep,v^\ep)f(x,u^\ep,v^\ep) \, dx
\\[2mm]
\label{I123}
&=:\mathcal{I}_1+\mathcal{I}_2+\mathcal{I}_3.
\end{align}
From \cite[(3.18) and (3,19), p.10]{SY1}, we know that
\begin{align}\label{Con:ap1strightcon}
&\mathcal{I}_1
\le -c_2(T)\| D^\ep(u^\ep,v^\ep)\nabla u^\ep \|_{L^2(\Omega)}^2 
+ c_3(T) \| \nabla v^\ep \|_{L^2(\Omega)}^2, \\
\label{??}
&
\mathcal{I}_2 
\le c_4(T)
+c_4(T)\|\Delta v^\ep\|^2_{L^2(\Omega)}
\end{align}
with some positive constants 
$c_2(T)$, $c_3(T)$ and $c_4(T)$ 
independent of $\ep$.
Moreover, in light of \eqref{KK}, the continuity of $f$ and \eqref{Ine:hanaDep}, we obtain 
\begin{align}
\nonumber
\mathcal{I}_3
&\le \int_\Omega (\mathscr{D}(u^\ep,v^\ep)+u^\ep) f(x,u^\ep,v^\ep) \, dx \\
\label{TT}
&\le |\Omega| 
(\|\mathscr{D}\|_{L^\infty((0,1)\times(0,K(T)))} +1)
\|f\|_{L^\infty(\Omega\times(0,1)\times(0,K(T)))}.
\end{align}
Thanks to \eqref{Con1} and \eqref{Con3}, 
we collect \eqref{I123}, \eqref{Con:ap1strightcon}, \eqref{??} and  \eqref{TT},
and integrate over $(0,T)$ to obtain  
\eqref{Con2}. 

We next prove \eqref{Con4} and \eqref{Con5}.
In light of \eqref{Ine:hanaDep}, \eqref{Def:hanaD} 
and \eqref{KK}, we have 
\begin{align}\label{@@@}
|(\mathscr{D}^{\ep})_s (u^{\ep}, v^{\ep})|
= |\mathscr{D}_s (u^{\ep}, v^{\ep})|
\le \int_0^{u^{\ep}} |D_s(\sigma, v^{\ep})| \, d\sigma 
\le \|D_s\|_{L^{\infty}((0,1) \times (0, K(T)))} =: c_5(T).
\end{align}
Thus, \eqref{hanaDep} and \eqref{@@@} lead to
\begin{align*}
| \nabla [\mathscr{D}^\ep(u^\ep,v^\ep)] | 
&\le | D^\ep(u^\ep,v^\ep) \nabla u^\ep |
+ | (\mathscr{D}^\ep)_s(u^\ep,v^\ep) | | \nabla v^\ep | \\
&\le | D^\ep(u^\ep,v^\ep) \nabla u^\ep | + c_5(T) | \nabla v^\ep |.
\end{align*}
This in conjunction with \eqref{Con3} and \eqref{Con2} implies \eqref{Con4}.
Moreover, since $D^\ep(u^\ep,v^\ep) \ge D(u^\ep,v^\ep) \ge D_0(u^\ep)$ by
\eqref{Dep} and \eqref{Con}, we see from \eqref{hanaD0} that
\[
| D^\ep(u^\ep,v^\ep) \nabla u^\ep| \ge | D_0(u^\ep) \nabla u^\ep | 
= | \nabla [\mathscr{D}_0(u^\ep)] |
\] 
and hence, the inequality \eqref{Con2} yields \eqref{Con5}.

We 
finally prove \eqref{Con6}. 
To this end, we fix $\varphi \in H^1(\Omega)$ with 
$\| \varphi \|_{H^1(\Omega)} \le 1$, and estimate the quantity
$|\langle (u^\ep)_t,\varphi \rangle_{(H^1(\Omega))', H^1(\Omega)}|^2 $.
It follows from the first equation in \eqref{Sys:Mainkinnji} that
\begin{align}
\nonumber
&| \langle (u^\ep)_t,\varphi \rangle_{(H^1(\Omega))', H^1(\Omega)} |^2 \\
\nonumber
&= 
| \langle \nabla \cdot (D^\ep(u^\ep,v^\ep) \nabla u^\ep 
- h(u^\ep,v^\ep) \nabla v^\ep) + f(\cdot,u^\ep,v^\ep), \varphi \rangle_{(H^1(\Omega))', H^1(\Omega)} |^2 \\
\nonumber
&\le 2 
| \langle D^\ep(u^\ep,v^\ep) \nabla u^\ep 
- h(u^\ep,v^\ep) \nabla v^\ep, \nabla \varphi \rangle_{(H^1(\Omega))', H^1(\Omega)} |^2 
\\ 
\nonumber
&\quad \,+2
| \langle f(\cdot,u^\ep,v^\ep), \varphi \rangle_{(H^1(\Omega))', H^1(\Omega)} |^2 \\
\nonumber
&=2
| \langle \nabla [\mathscr{D}^\ep(u^\ep,v^\ep)]  
- ((\mathscr{D}^\ep)_s(u^\ep,v^\ep) + h(u^\ep,v^\ep)) \nabla v^\ep,
\nabla \varphi \rangle_{(H^1(\Omega))', H^1(\Omega)} |^2 \\
\nonumber
&\quad \, +2
| \langle f(\cdot,u^\ep,v^\ep), \varphi \rangle_{(H^1(\Omega))', H^1(\Omega)} |^2
\\
\label{J1J2}
&=: \mathcal{J}_1 + \mathcal{J}_2.
\end{align}
For $\mathcal{J}_1$, 
we infer from \eqref{@@@} and \eqref{KK}
that
\begin{align} \nonumber 
\mathcal{J}_1
&\le 2
\Big(\| \nabla [\mathscr{D}^\ep(u^\ep,v^\ep)] \|_{L^2(\Omega)} 
  + (c_5(T) + \| h \|_{L^\infty((0,1) \times (0,K(T)))}) 
  \| \nabla v^\ep \|_{L^2(\Omega)} \Big)^2 \| \nabla \varphi \|_{L^2(\Omega)}^2 \\
\label{JJ11}
&\le 4 \Big(\| \nabla [\mathscr{D}^\ep(u^\ep,v^\ep)] \|_{L^2(\Omega)}^2
+ \big(c_5(T) + \| h \|_{L^\infty((0,1) \times (0,K(T)))}\big)^2
  \| \nabla v^\ep \|_{L^2(\Omega)}^2\Big) \| \nabla \varphi \|_{L^2(\Omega)}^2.
\end{align}
For $\mathcal{J}_2$, by means of  
the H\"{o}lder inequality, the continuity of $f$ and 
\eqref{KK}, we have
\begin{align*}
\mathcal{J}_2 
\le 2
\| f(\cdot,u^\ep,v^\ep) \|^2_{L^2(\Omega)} \| \varphi \|^2_{L^2(\Omega)} 
\le 2 |\Omega| \|f\|^2_{L^\infty(\Omega\times(0,1)\times(0,K(T)))} 
\| \varphi \|^2_{L^2(\Omega)}.
\end{align*}
Combining this and \eqref{JJ11} 
with \eqref{J1J2} shows that 
\begin{align*}
\|  (u^\ep)_t  \|_{(H^1(\Omega))'}^2
&\le 
4 \Big(\| \nabla [\mathscr{D}^\ep(u^\ep,v^\ep)] \|_{L^2(\Omega)}^2
+ \big(c_5(T) + \| h \|_{L^\infty((0,1) \times (0,K(T)))}\big)^2
  \| \nabla v^\ep \|_{L^2(\Omega)}^2\Big) \\
&\quad \,  +
2 |\Omega| \|f\|^2_{L^\infty(\Omega\times(0,1)\times(0,K(T)))}. 
\end{align*}
Integrating this over $(0,T)$ and using \eqref{Con3} 
and \eqref{Con4}, we arrive at \eqref{Con6}.
\end{proof}

\section{Existence of global weak solutions:
Proof of Theorem \ref{Thm}} \label{Sec:GWS}
Let us define a global weak solution of \eqref{Sys:Main}.
\begin{df}[Global weak solutions] \label{Def:WS}
Let $u_0$ and $v_0$ satisfy \eqref{Con:ini}. 
Then a couple $(u,v)$ of nonnegative functions 
with $u\le1$ a.e.\ in $\overline{\Omega} \times [0, \infty)$ such that
\begin{align} \label{Def1}
    \begin{cases}
          u \in C_{\mathrm{w}}([0,\infty);L^2(\Omega)) \cap
                      L^\infty(\Omega \times (0,\infty))
          \quad \mbox{and} \quad            
          \\
          v \in C([0,\infty);L^2(\Omega)) \cap
                      L^{2}_{\mathrm{loc}}([0,\infty);H^2(\Omega)) \cap
                 L^{\infty}_{\mathrm{loc}}(\overline{\Omega} \times [0,\infty))
    \end{cases}
\end{align}
will be called a \textit{global weak solution} of \eqref{Sys:Main} if 
\begin{align} \label{Def2}
    \begin{cases}
          \mathscr{D}(u,v) \in L^{2}_{\mathrm{loc}}([0,\infty);H^1(\Omega))
          \quad \mbox{and} \quad            
          \\
          \mathscr{D}_s(u,v) \nabla v 
              \in L^{2}_{\mathrm{loc}}([0,\infty);(L^2(\Omega))^N),
    \end{cases}
\end{align}
where $\mathscr{D}(r,s) = \int_0^r D(\sigma,s) \, d\sigma$
as in \eqref{Def:hanaD}, and if
\begin{align} \label{Def3}
 \int_0^T \int_\Omega [-u \varphi_t 
            +(D(u,v) \nabla u - h(u,v) \nabla v)
                                             \cdot \nabla \varphi
            +f(x,u,v)\varphi ] \, dxdt                                 
         = \int_\Omega u_0 \varphi(0) \, dx
\end{align}
as well as 
\begin{align} \label{Def4}
  \int_0^T \int_\Omega [-v \varphi_t 
            + \nabla v \cdot \nabla \varphi 
            - g(u,v)\varphi] \, dxdt
         = \int_\Omega v_0 \varphi(0) \, dx
\end{align}
hold for all $T > 0$ and $\varphi \in H^1((0,T);H^1(\Omega))$ 
with $\varphi(T) = 0$,
where 
\begin{align} \nonumber
D(u,v)\nabla u
:=\nabla[\mathscr{D}(u,v)]-\mathscr{D}_s(u,v)\nabla v.
\end{align}
\end{df}

\begin{proof}[\bf Proof of Theorem \ref{Thm}] 
As a first step, we construct a limit function $(u,v)$ of $(u^\ep,v^\ep)$.
We define a function $Q\in C^2([0,1])$ by setting 
\begin{align}\label{Q}
Q(r) :=\int_0^r D_0^2(\sigma) \, d\sigma \quad \mbox{for} \ r \in [0,1],
\end{align}
where $D_0$ is such that
$D(r,s)\ge D_0(r)>0$ 
as in \eqref{Con}.
Fix $T>0$.
We claim that
\begin{equation} \label{Puep}
\left\{ Q(u^\ep) \right\}_{\ep \in (0,1)} \, \mbox{is bounded in} \,
\{ w \in L^2((0,T);H^1(\Omega)) \mid 
w_t \in L^1((0,T);(W^{1,N+1}(\Omega))') \}.
\end{equation}
First, we show that 
$\| Q(u^\ep) \|_{L^2((0,T);H^1(\Omega))} \le c_1(T)$
with some positive constant $c_1(T)$
independent of $\ep$. 
From \eqref{KK}, we have 
$Q(u^\ep) \le Q(1)$, 
which leads to 
$\| Q(u^\ep) \|_{L^2((0,T);L^2(\Omega))} \le c_2(T)$
with some positive constant $c_2(T)$
independent of $\ep$.
Also, the estimates \eqref{KK} and \eqref{Con5} provide  
a positive constant $c_{3}(T)$ independent of $\ep$ such that
\begin{align*}
\int_0^T \| \nabla [Q(u^\ep)] \|_{L^2(\Omega)}^2 \, dt
&\le \| D_0 \|_{L^\infty((0,1))}^2
\int_0^T \| \nabla [\mathscr{D}_0(u^\ep)] \|_{L^2(\Omega)}^2 \, dt \le c_{3}(T).
\end{align*}
Therefore, we see that
\begin{align} \label{Quep1}
\| Q(u^\ep) \|_{L^2((0,T);H^1(\Omega))} \le \sqrt{c_2(T)^2 + c_3(T)}=:c_1(T).
\end{align}
Next, we prove  
\begin{align} \label{Quep2}
\| (Q(u^\ep))_t \|_{L^1((0,T);(W^{1,N+1}(\Omega))')} \le c_{4}(T)
\end{align}
with some positive constant $c_{4}(T)$ 
independent of $\ep$.
Let $\varphi \in W^{1,N+1}(\Omega)$.
In view of \eqref{Q}, 
we can estimate
\begin{align} 
\nonumber
|\langle (Q(u^\ep))_t, \varphi \rangle_{(W^{1,N+1}(\Omega))',W^{1,N+1}(\Omega)}| 
&=| \langle ( D_0^2(u^\ep)(u^\ep)_t, \varphi \rangle_{(W^{1,N+1}(\Omega))',W^{1,N+1}(\Omega)} | \\
\nonumber
&=
| \langle (u^\ep)_t, D_0^2(u^\ep) \varphi \rangle_{(H^1(\Omega))',H^1(\Omega)} | \\ 
\label{11}
&\le
\| (u^\ep)_t \|_{(H^1(\Omega))'} \| D_0^2(u^\ep) \varphi \|_{H^1(\Omega)}
\end{align}
in $(0,T)$.
Here, we estimate $\| D_0^2(u^\ep) \varphi \|_{H^1(\Omega)}$.
Noting that $D_0(u^\ep) \le \| D_0 \|_{L^\infty((0,1))}$, that 
$D_0'(u^\ep) \le \| D_0' \|_{L^\infty((0,1))}$ 
due to \eqref{KK}, and that
$\varphi \in W^{1,N+1}(\Omega) \subset L^\infty(\Omega)$,
we infer that
\begin{align} 
\nonumber
&\| D_0^2(u^\ep) \varphi \|_{H^1(\Omega)} \\
\nonumber
&\le \|D_0^2(u^\ep) \varphi\|_{L^2(\Omega)}  
      + \|2D_0(u^\ep)D_0'(u^\ep) (\nabla u^\ep)\varphi 
           +D_0^2(u^\ep) \nabla \varphi\|_{L^2(\Omega)} \\
\nonumber
&\le \|D_0^2(u^\ep) \varphi\|_{L^2(\Omega)} 
+\|2D_0'(u^\ep) \varphi \nabla [\mathscr{D}_0(u^\ep)] \|_{L^2(\Omega)} 
+\|D_0^2(u^\ep) \nabla \varphi\|_{L^2(\Omega)}  \\
\nonumber
&\le 2\Big(\| D_0 \|_{L^\infty((0,1))}^2\| \varphi \|_{H^1(\Omega)}
+\| D_0' \|_{L^\infty((0,1))} \| \varphi \|_{L^\infty(\Omega)}
    \| \nabla [\mathscr{D}_0(u^\ep)] \|_{L^2(\Omega)} \Big),
\end{align}
which along with the  
embeddings $W^{1,N+1}(\Omega) \hookrightarrow L^\infty(\Omega)$ and $W^{1,N+1}(\Omega) \hookrightarrow H^1(\Omega)$ 
yields 
\[
\| D_0^2(u^\ep) \varphi \|_{H^1(\Omega)} 
\le c_{5}\| \varphi \|_{W^{1,N+1}(\Omega)}
            \big(1+\| \nabla [\mathscr{D}_0(u^\ep)] \|_{L^2(\Omega)}\big) 
\]
with some positive constant $c_{5}$ 
independent of $\ep$. 
Plugging this inequality into \eqref{11}, we can obtain
\[
\|  (Q(u^\ep))_t(t) \|_{(W^{1,N+1}(\Omega))'} 
\le c_{5} \| (u^\ep)_t \|_{(H^1(\Omega))'}
             \big(1+\| \nabla [\mathscr{D}_0(u^\ep)] \|_{L^2(\Omega)}\big).
\]
By integrating this inequality over $(0,T)$, 
the estimates \eqref{Con5} and \eqref{Con6} lead to \eqref{Quep2},
which together with \eqref{Quep1} yields
\eqref{Puep}. 
It follows from \eqref{Puep} and the Aubin--Lions theorem that 
$\left\{ Q(u^\ep) \right\}_{\ep \in (0,1)}$ is
relatively compact in 
$L^2(\Omega \times (0,T))$.
Thus, 
$\left\{u^\ep \right\}_{\ep \in (0,1)}$ is relatively compact in 
$L^2(\Omega \times (0,T))$
since $Q$ is increasing on $[0,1]$.
Also, in light of the Arzel\`{a}--Ascoli theorem, 
$\left\{u^\ep\right\}_{\ep \in (0,1)}$ is relatively compact in
$C([0,T];(H^1(\Omega))')$, and \eqref{Con1} implies that
$\left\{v^\ep \right\}_{\ep \in (0,1)}$ is relatively compact 
in $C([0,T];L^2(\Omega))$.
Since 
$\left\{ u^\ep \right\}_{\ep \in (0,1)}$ is bounded 
in $L^\infty(\Omega \times (0,\infty))$ by 
\eqref{KK} and $\left\{v^\ep \right\}_{\ep \in (0,1)}$ 
is bounded 
in $L^2_{\rm{loc}}([0,\infty); H^2(\Omega))$ due to \eqref{Con1}, 
we find a sequence $\{\ep_k\}_{k\in\mathbb{N}}\subset(0,1)$ with $\ep_k \to 0$ as $k\to \infty$ and functions 
\begin{align*} 
    \begin{cases}
         u \in L^\infty(\Omega \times (0,\infty)) 
\cap C([0,\infty);(H^1(\Omega))') 
          \quad \mbox{and} \quad            
          \\
         v \in L^2_{\rm{loc}}([0,\infty); H^2(\Omega)) 
\cap C([0,\infty);L^2(\Omega)) 
    \end{cases}
\end{align*} 
such that
\begin{align} 
\label{0}
&v^{\ep_k} \to v \quad 
\mbox{weakly in} \ L^2((0,T); H^2(\Omega)),  
\\
\label{shuusoku11}
&(u^{\ep_k},v^{\ep_k}) \to (u,v) \quad \mbox{in} \ 
L^2(\Omega \times (0,T))\times L^2(\Omega \times (0,T)), \\
\label{shuusoku2}
&(u^{\ep_k},v^{\ep_k}) \to (u,v) \quad \mbox{in} \ 
C([0,T];(H^1(\Omega))') \times C([0,T];L^2(\Omega)), 
\\
\label{ae}
&u^{\ep_k} \to u \quad \mbox{and} \quad
v^{\ep_k} \to v \quad \mbox{a.e.\ in} \ \Omega \times (0,\infty)
\end{align}
as $k \to \infty$.
We now verify that the limit couple 
$(u,v)$ is a global weak solution of \eqref{Sys:Main} in the sense of 
Definition \ref{Def:WS}. 

First, we prove
\eqref{Def1}, $0 \le u \le 1$ and $v\ge 0$.
As shown above, we already know that 
$u \in L^\infty(\Omega \times (0,\infty))$ and 
$v \in C([0,\infty);L^2(\Omega)) \cap L^2_{\rm{loc}}([0,\infty); H^2(\Omega))$, 
and hence 
it is sufficient to confirm that 
$u(x,t) \le 1$ for a.a.\ $(x,t) \in \overline{\Omega} \times [0,\infty)$,
that $u \in C_{\mathrm{w}}([0,\infty);L^2(\Omega))$
and that 
$v \in L^{\infty}_{\mathrm{loc}}(\overline{\Omega} \times [0,\infty))$.
From \eqref{KK} and \eqref{ae}, it follows that
$u(x,t)\in [0,1]$  
for a.a.\ $(x,t) \in \overline{\Omega} \times [0,\infty)$, 
and that 
$v(x,t) \in [0,K(T)]$ 
for a.a.\ $(x,t) \in \overline{\Omega} \times [0,T]$.
Moreover,  \eqref{shuusoku2} implies 
$u \in C_{\mathrm{w}}([0,\infty);L^2(\Omega))$, 
which entails \eqref{Def1}.

Next, we prove \eqref{Def2}. 
In view of \eqref{Ine:hanaDep}, \eqref{KK} and \eqref{ae},
we obtain $\mathscr{D}(u, v) \in L^2((0,T);H^1(\Omega))$ 
and
\begin{equation}\label{1}
\nabla[\mathscr{D}^{\ep_k}(u^{\ep_k}, v^{\ep_k})] \to \nabla[\mathscr{D}(u, v)] 
\quad \mbox{weakly in}\  L^2((0,T);(L^2(\Omega))^N)
\end{equation}
as $k \to \infty$.
On the other hand, 
in view of 
\eqref{@@@} and \eqref{0}, we have  
$ \mathscr{D}_s(u,v) \nabla v 
\in L^2((0,T);(L^2(\Omega))^N)$ and 
\begin{equation} \label{@}
(\mathscr{D}^{\ep_k})_s(u^{\ep_k},v^{\ep_k})\nabla v^{\ep_k} \to \mathscr{D}_s(u,v) \nabla v 
\quad \mbox{weakly in}\ L^2((0,T);(L^2(\Omega))^N)
\end{equation}
as $k \to \infty$. 
The said inclusions are indeed \eqref{Def2}.

Next, we show \eqref{Def3} and \eqref{Def4}
for all 
$\varphi \in H^1((0,T);H^1(\Omega))$ with $\varphi(T) = 0$.
Setting
\[
D(u,v)\nabla u
 :=\nabla[\mathscr{D}(u,v)]-\mathscr{D}_s(u,v)\nabla v,
 \]
we see from the inclusion 
$\mathscr{D}(u,v)\in L^2((0,T);H^1(\Omega))$ 
(by \eqref{1}) 
and 
the inclusion 
$ \mathscr{D}_s(u,v) \nabla v \in L^2((0,T);(L^2(\Omega))^N)$ 
(by \eqref{@}) 
that
$D(u,v)\nabla u \in  L^2((0,T);(L^2(\Omega))^N)$. 
A combination of \eqref{1} and \eqref{@} yields
\begin{equation} \label{<}
D^{\ep_k} (u^{\ep_k}, v^{\ep_k})\nabla v^{\ep_k} \to D(u,v)\nabla v \quad 
\mbox{weakly in}\ L^2((0,T);(L^2(\Omega))^N)
\end{equation}
as $k \to \infty$.
On the other hand, 
we know from \eqref{ae} and \eqref{0} that
\begin{equation} \label{kokodemo}
h(u^{\ep_k},v^{\ep_k}) \nabla v^{\ep_k} \to h(u,v) \nabla v \quad 
\mbox{weakly in}\ L^2((0,T);(L^2(\Omega))^N)
\end{equation}
as $k \to \infty$.
Making use of \eqref{ae}
and the continuity of $f$,  we have
$
f(\cdot, u^{\ep_k},v^{\ep_k}) \to f(\cdot,u,v)
$
a.e.\ in $\Omega \times (0,T)$ 
as $k \to \infty$. 
Thus in light of \eqref{KK},
we obtain 
\begin{align} \label{22}
f(\cdot, u^{\ep_k},v^{\ep_k}) \to f(\cdot,u,v) 
\quad
\mbox{weakly$^*$ in} \ L^\infty((0,T); L^\infty(\Omega)) 
\end{align}
as $k \to \infty$. 
Testing the first equation in \eqref{Sys:Mainkinnji} by $\varphi$,
we confirm that
\begin{align*} 
\frac{d}{dt}\int_\Omega u^{\ep_k} \varphi \, dx- \int_\Omega u^{\ep_k} \varphi_t \, dx
&=-\int_\Omega  (D^{\ep_k}(u^{\ep_k},v^{\ep_k}) \nabla u^{\ep_k} 
                         - h(u^{\ep_k},v^{\ep_k}) \nabla v^{\ep_k}) \cdot \nabla \varphi \, dx
\\
&\quad \, + \int_\Omega f(x,u^{\ep_k},v^{\ep_k}) \varphi  \, dxdt.  
\end{align*}
Integrating this identity over $(0,T)$ 
and noting that 
$\varphi(T)=0$, we observe that
\begin{align*} 
&\!\int_0^T\!\! \int_\Omega [-u^{\ep_k} \varphi_t  
+(D^{\ep_k}(u^{\ep_k},v^{\ep_k}) \nabla u^{\ep_k} 
                           - h(u^{\ep_k},v^{\ep_k}) \nabla v^{\ep_k}) \cdot \nabla \varphi 
             + f(x,u^{\ep_k},v^{\ep_k}) \varphi]  \, dxdt
                           \\
&=\int_\Omega u^{\ep_k}_0 \varphi(0) \, dx.
\end{align*} 
Passing to the limit as $k \to \infty$, we arrive at \eqref{Def3}
from \eqref{shuusoku11}, \eqref{<}, \eqref{kokodemo}, \eqref{22} and \eqref{111}. 
Similarly, 
from \eqref{0}, \eqref{shuusoku11} and \eqref{111},
we can verify \eqref{Def4}. 
\end{proof}
\section{Uniqueness of global weak solutions:
Proof of Theorem \ref{Thm2}} \label{Sec:GWSU}

For $w \in L^2(\Omega)$, we let $\mathscr{N}w \in H^2(\Omega)$ denote the unique strong solution
of the problem 
\begin{equation} \label{Def:N}
\begin{cases}
  -\Delta(\mathscr{N}w) +\mathscr{N}w = w,  &x\in\Omega, \\
  \nabla(\mathscr{N}w) \cdot \nu = 0,   &x\in \partial \Omega.
\end{cases}
\end{equation}
\begin{proof}[\bf Proof of Theorem \ref{Thm2}]
Let $(u,v)$ and $(\widehat{u},\widehat{v})$ be global weak solutions 
of \eqref{Sys:Main} in the sense of Definition \ref{Def:WS}. 
Set
\[
U := u-\widehat{u} \quad \mbox{and} \quad
V := v-\widehat{v}.
\]
Let $T>0$ and fix $t \in [0,T]$. 
Then 
it follows from
\cite[p.108, Proposition 2.1 (b)$\Rightarrow$(a)]{Showalter-1997} 
that for all $\varphi \in L^2((0,t);H^1(\Omega))$, 
\begin{align}
\nonumber
&\int_0^t  
\langle U_t, \varphi \rangle
_{(H^1(\Omega))', H^1(\Omega)} \, ds \\
\nonumber
&= -\int_0^t \int_\Omega 
(D(u)\nabla u - D(\widehat{u})\nabla \widehat{u}
-(h(u,v)\nabla v-h(\widehat{u},\widehat{v})\nabla v + h(\widehat{u},\widehat{v})\nabla V))
\cdot \nabla \varphi \, dxds \\
\label{A}
&\quad \, -\int_0^t \int_\Omega (f(x,u,v)-f(x,\widehat{u},\widehat{v}))
\varphi \, dxds,
\end{align}
where $U_t$ is understood 
in the sense of distribution.
Here, we can choose $\varphi = \mathscr{N}U \in L^2((0,t);H^1(\Omega))$. 
To see this, testing the first equation in 
\eqref{Def:N} with $w=U$ by $\mathscr{N}U$ gives
\begin{align*}
\int_\Omega |\nabla (\mathscr{N}U)|^2 \, dx
+\int_\Omega (\mathscr{N}U)^2 \, dx
=\int_\Omega U (\mathscr{N}U) \, dx,
\end{align*}
which by means of the Schwarz inequality 
shows that
\begin{align} \label{NUU}
\| \mathscr{N}U \|_{H^1(\Omega)}
\le 
\| U \|_{L^2(\Omega)}.
\end{align}
Since $0 \le u \le 1$ and $0 \le \widehat{u} \le 1$, it holds that $|U| \le 1$.
This together with \eqref{NUU}
yields 
$\mathscr{N}U \in L^2((0,t);H^1(\Omega))$. 
Therefore, we can substitute $\varphi = \mathscr{N}U$ in \eqref{A} and 
observe that
\begin{align}
\nonumber
&\int_0^t  
\langle U_t, \mathscr{N}U \rangle
_{(H^1(\Omega))', H^1(\Omega)}
\, ds \\
\nonumber
&= -\int_0^t \int_\Omega 
(D(u)\nabla u - D(\widehat{u})\nabla \widehat{u} 
-(h(u,v)\nabla v-h(\widehat{u},\widehat{v})\nabla v + h(\widehat{u},\widehat{v})\nabla V))
\cdot \nabla (\mathscr{N}U) \, dxds \\
\nonumber
&\quad \, -\int_0^t \int_\Omega (f(x,u,v)-f(x,\widehat{u},\widehat{v}))(\mathscr{N}U) \, dxds \\[2mm]
\label{B}
&=: I(t) + J(t).  
\end{align}
First, we estimate $I(t)$. 
Noting that 
$\mathscr{D}(r) = \int_0^r D(\sigma) \, d\sigma$, 
we integrate by parts along with \eqref{Def:N} and 
the Schwarz inequality to obtain 
\begin{align}\nonumber
I(t)
&= \int_0^t \int_\Omega (\mathscr{D}(u)-\mathscr{D}(\widehat{u})) 
\Delta(\mathscr{N}U)
\, dxds \\
\nonumber
&\quad \, + \int_0^t \int_\Omega (h(u,v)-h(\widehat{u},\widehat{v}))\nabla v
\cdot \nabla(\mathscr{N}U) \, dxds \\
\nonumber
&\quad \, + \int_0^t \int_\Omega h(\widehat{u},\widehat{v}) \nabla V
\cdot \nabla(\mathscr{N}U) \, dxds \\
\nonumber
&\le - \int_0^t \int_\Omega (\mathscr{D}(u)-\mathscr{D}(\widehat{u}))U \, dxds 
+ \int_0^t \int_\Omega (\mathscr{D}(u)-\mathscr{D}(\widehat{u}))\mathscr{N}U \, dxds\\
\nonumber
&\quad \, + \int_0^t \| \nabla v\|_{L^\infty(\Omega)}
\| \nabla(\mathscr{N}U) \|_{L^2(\Omega)}
\| h(u,v)-h(\widehat{u},\widehat{v}) \|_{L^2(\Omega)} \, ds \\
\nonumber
&\quad \, + \| h \|_{L^\infty((0,1) \times (0,L(T)))}
\int_0^t \| \nabla(\mathscr{N}U) \|_{L^2(\Omega)}
\| \nabla V \|_{L^2(\Omega)} \, ds \\
\label{tenn}
&=:-I_1(t)+I_2(t)+I_3(t)+I_4(t),
\end{align}
where $L(T) := \max \{ \| v \|_{L^\infty(\Omega \times (0,T))},
\| \widehat{v} \|_{L^\infty(\Omega \times (0,T))} \}$.
We first estimate $I_2(t)$ on the right-hand side.
Let $\delta > 0$,
which will be suitably fixed later. 
In light of the Young inequality, 
we have
\begin{align}
\nonumber
I_2(t)&=\int_0^t \int_\Omega (\mathscr{D}(u)-\mathscr{D}(\widehat{u}))\mathscr{N}U \, dxds
\\
\label{Te}
&\le 
\frac{\delta}{2} \int_0^t \int_\Omega (\mathscr{D}(u)-\mathscr{D}(\widehat{u}))^2 \, dxds
+ \frac{1}{2\delta} \int_0^t \int_\Omega 
(\mathscr{N}U)^2 \, dxds.
\end{align}
Here, 
we claim that
\begin{align}\label{hosoku}
\int_0^t \int_\Omega (\mathscr{D}(u)-\mathscr{D}(\widehat{u}))^2 \, dxds
\le \| D \|_{L^\infty((0,1))}
\int_0^t \int_\Omega(\mathscr{D}(u)-\mathscr{D}(\widehat{u}))U\, dxds.
\end{align}
Indeed, if $r \ge \widehat{r}$, then 
it follows from the definition of $\mathscr{D}$
in \eqref{SCon} that 
\begin{align*}
0 \le \mathscr{D}(r)-\mathscr{D}(\widehat{r})
=\int_{0}^{r}D(\sigma) \, d\sigma - \int_{0}^{\widehat{r}}
D(\sigma) \, d\sigma
=\int_{\widehat{r}}^{r}D(\sigma) \, d\sigma
\le \| D \|_{L^\infty((0,1))}(r-\widehat{r}),
\end{align*}
which yields 
\begin{align*}
0 \le (\mathscr{D}(r)-\mathscr{D}(\widehat{r}))^2
\le \| D \|_{L^\infty((0,1))}(\mathscr{D}(r)-\mathscr{D}(\widehat{r}))(r-\widehat{r}).
\end{align*}
This holds also
in the case that $\widehat{r} \ge r$, 
which guarantees \eqref{hosoku}.
Therefore, we see from \eqref{Te} and \eqref{hosoku} that 
\begin{align}
\nonumber
I_2(t)
&\le \frac{\delta}{2}\| D \|_{L^\infty((0,1))}
\int_0^t \int_\Omega(\mathscr{D}(u)-\mathscr{D}(\widehat{u}))U\, dxds
+ \frac{1}{2\delta} \int_0^t \int_\Omega 
(\mathscr{N}U)^2 \, dxds \\
\label{roku}
&= \frac{\delta}{2}\| D \|_{L^\infty((0,1))} \, I_1(t) + \frac{1}{2\delta} \int_0^t \int_\Omega 
(\mathscr{N}U)^2 \, dxds.
\end{align}
Next, we deal with $I_3(t)$.
The Young inequality allows us to find that 
\begin{align}
\nonumber
I_3(t)&=\int_0^t \| \nabla v\|_{L^\infty(\Omega)}
\| \nabla(\mathscr{N}U) \|_{L^2(\Omega)}
\| h(u,v)-h(\widehat{u},\widehat{v}) \|_{L^2(\Omega)} \, ds \\
\label{u7}
&\le  \frac{\delta}{2} \int_0^t \int_\Omega (h(u,v)-h(\widehat{u},\widehat{v}))^2 \, dxds
+ \frac{1}{2\delta} 
\int_0^t \| \nabla v\|_{L^\infty(\Omega)}^2
\| \nabla(\mathscr{N}U) \|_{L^2(\Omega)}^2 \, ds.
\end{align} 
Here, thanks to the assumption on $h$ in \eqref{SCon}, 
we can control the first term on the 
right-hand side of \eqref{u7} as
\begin{align}
\nonumber
&\frac{\delta}{2} \int_0^t \int_\Omega (h(u,v)-h(\widehat{u},\widehat{v}))^2 \, dxds \\
\label{090}
&\le \frac{\delta}{2} c_1(T)
   \int_0^t \int_\Omega (\mathscr{D}(u)-\mathscr{D}(\widehat{u}))U \, dxds
 + \frac{\delta}{2} 
 c_2(T) \int_0^t \int_\Omega V^2 \, dxds
\end{align} 
with some positive constants $c_1(T)$ and $c_2(T)$. 
Thus, plugging \eqref{090} into \eqref{u7} leads to
\begin{align}
\nonumber
I_3(t) 
&\le \frac{\delta}{2} c_1(T)
   \int_0^t \int_\Omega (\mathscr{D}(u)-\mathscr{D}(\widehat{u}))U \, dxds
 + \frac{\delta}{2} 
 c_2(T) \int_0^t \int_\Omega V^2 \, dxds \\
\nonumber
&\quad \, +\frac{1}{2\delta} 
\int_0^t \| \nabla v\|_{L^\infty(\Omega)}^2
\| \nabla(\mathscr{N}U) \|_{L^2(\Omega)}^2 \, ds \\
&=\frac{\delta}{2} c_1(T)
   I_1(t)
 + \frac{\delta}{2} 
 c_2(T) \int_0^t \int_\Omega V^2 \, dxds 
 \label{PPP}
+\frac{1}{2\delta} 
\int_0^t \| \nabla v\|_{L^\infty(\Omega)}^2
\| \nabla(\mathscr{N}U) \|_{L^2(\Omega)}^2 \, ds.
\end{align} 
On the other hand, $I_4(t)$ can be estimated as  
\begin{align}
\nonumber
I_4(t)&=\| h \|_{L^\infty((0,1) \times (0,L(T)))}
\int_0^t \| \nabla(\mathscr{N}U) \|_{L^2(\Omega)}
\| \nabla V \|_{L^2(\Omega)} \, ds \\
\label{707}
&\le 
\frac{\delta}{2} \int_0^t \| \nabla V \|_{L^2(\Omega)}^2 \, ds
+ \frac{1}{2\delta} \| h \|_{L^\infty((0,1) \times (0,L(T)))}^2
\int_0^t \| \nabla(\mathscr{N}U) \|_{L^2(\Omega)}^2 \, ds.
\end{align}
Collecting \eqref{roku}, \eqref{PPP} and \eqref{707} in \eqref{tenn},
we obtain 
\begin{align} 
\nonumber
I(t) &\le - \int_0^t \int_\Omega (\mathscr{D}(u)-\mathscr{D}(\widehat{u}))U \, dxds 
+ \frac{\delta}{2} \| D \|_{L^\infty((0,1))}
\int_{0}^{t}\int_{\Omega} (\mathscr{D}(u)-\mathscr{D}(\widehat{u}))U \, dxds
\\
\nonumber
&\quad \, 
+ \frac{\delta}{2} c_1(T)
   \int_0^t \int_\Omega (\mathscr{D}(u)-\mathscr{D}(\widehat{u}))U \, dxds
 + \frac{\delta}{2} 
 c_2(T) \int_0^t \int_\Omega V^2 \, dxds \\
 \nonumber
&\quad \, 
+ \frac{1}{2\delta} 
\int_0^t \| \nabla v\|_{L^\infty(\Omega)}^2
\| \nabla(\mathscr{N}U) \|_{L^2(\Omega)}^2 \, ds \\
\label{I(t)}
&\quad \,
+ \frac{\delta}{2} \int_0^t \| \nabla V \|_{L^2(\Omega)}^2 \, ds
+ \frac{1}{2\delta} \| h \|_{L^\infty((0,1) \times (0,L(T)))}^2
\int_0^t \| \nabla(\mathscr{N}U) \|_{L^2(\Omega)}^2 \, ds.
\end{align} 
Next, we estimate $J(t)$. In light of the Young inequality, we have 
\begin{align}
\nonumber
J(t)&=-\int_0^t \int_\Omega (f(x,u,v)-f(x,\widehat{u},\widehat{v}))
(\mathscr{N}U) \, dxds \\
\label{+++}
&\le \frac{\delta}{2} \int_0^t \int_\Omega
|f(x,u,v)-f(x,\widehat{u},\widehat{v})|^2 \, dxds 
+ \frac{1}{2\delta} \int_0^t \int_\Omega (\mathscr{N}U)^2 \, 
dxds.
\end{align}
Here, we make use of the assumption on $f$ in \eqref{SCon} to see that 
\begin{align*}
\nonumber
\frac{\delta}{2} \int_0^t \int_\Omega
|f(x,u,v)-f(x,\widehat{u},\widehat{v})|^2 \, dxds 
&\le
\frac{\delta}{2} c_3(T)
\int_0^t \int_\Omega (\mathscr{D}(u)-\mathscr{D}(\widehat{u}))U \, dxds
\\
&\quad \, + \frac{\delta}{2} c_4(T) \int_0^t \int_\Omega V^2 \, dxds
\end{align*}
with some positive constants $c_3(T)$ and $c_4(T)$.
This together with \eqref{+++} shows that  
\begin{align}\nonumber
J(t)&\le 
\frac{\delta}{2} c_3(T)
\int_0^t \int_\Omega (\mathscr{D}(u)-\mathscr{D}(\widehat{u}))U \, dxds
+ \frac{\delta}{2} c_4(T) \int_0^t \int_\Omega V^2 \, dxds
\\
\label{egao}
&\quad \, 
+ \frac{1}{2\delta} \int_0^t \int_\Omega (\mathscr{N}U)^2 \, 
dxds.
\end{align}
Therefore, adding \eqref{I(t)} and \eqref{egao}  yields
\begin{align}
\nonumber
I(t)+J(t) 
&\le \Big(-1+\frac{\| D \|
_{L^\infty((0,1))}+c_1(T)+c_3(T)}{2}\delta \Big)
\int_0^t \int_\Omega (\mathscr{D}(u)-\mathscr{D}(\widehat{u}))U \, dxds
\\
\nonumber
&\quad \, +\frac{c_2(T)+c_4(T)}{2}\delta
\int_0^t \int_\Omega V^2 \, dxds
+ \frac{\delta}{2} \int_0^t \int_\Omega | \nabla V |^2 \, dxds \\
\nonumber
&\quad \, + \frac{1}{2\delta} \int_0^t 
\big(\| \nabla v 
\|_{L^\infty(\Omega)}^2 + \| h \|_{L^\infty((0,1) \times (0,L(T)))}^2 \big)
\Big(\int_\Omega 
| \nabla(\mathscr{N}U) |^2 \, dx\Big)ds \\
\label{HHH}
&\quad \, + \frac{1}{2\delta} \int_0^t \int_\Omega (\mathscr{N}U)^2 \, 
dxds.
\end{align}
Picking $\delta$ such that 
\begin{align}\label{delta}
0<\delta<\min\left\{1, \frac{2}{\| D \|
_{L^\infty((0,1))}+c_1(T)+c_3(T)}\right\},
\end{align}
we combine \eqref{HHH} with \eqref{B} to find
that
\begin{align}
\nonumber
&\int_0^t \langle U_t, \mathscr{N}U \rangle
_{(H^1(\Omega))', H^1(\Omega)} \, ds
\\
\nonumber
&\le \frac{c_2(T)+c_4(T)}{2}\delta
\int_0^t \int_\Omega V^2 \, dxds
+ \frac{\delta}{2} \int_0^t \int_\Omega | \nabla V |^2 \, dxds \\
\nonumber
&\quad \, + \frac{1}{2\delta} \int_0^t 
\big(\| \nabla v 
\|_{L^\infty(\Omega)}^2 + \| h \|_{L^\infty((0,1) \times (0,L(T)))}^2 \big)
\Big(\int_\Omega 
| \nabla(\mathscr{N}U) |^2 \, dx\Big)ds \\ 
\label{+}
&\quad \, + \frac{1}{\delta} \int_0^t \int_\Omega (\mathscr{N}U)^2 \, 
dxds.
\end{align}
Here, we can rewrite the left-hand side of \eqref{+} as
\begin{align} \label{E}
\int_0^t \langle U_t, 
\mathscr{N}U \rangle
_{(H^1(\Omega))', H^1(\Omega)} \, ds
=\frac12 \int_\Omega |\nabla(\mathscr{N}U)|^2 \, dx
+\frac12 \int_\Omega (\mathscr{N}U)^2 \, dx.
\end{align}
To verify this, first we know that
\begin{align}
\nonumber
\frac{d}{dt}\int_\Omega |\nabla(\mathscr{N}U)|^2 \, dx 
&=\lim_{h \to 0} \frac{1}{h} \Big(
\int_\Omega |\nabla(\mathscr{N}U(t+h))|^2 \, dx
-\int_\Omega |\nabla(\mathscr{N}U(t))|^2 \, dx
                                   \Big)
\\
\nonumber
&=\lim_{h \to 0} 
\int_\Omega \frac{\nabla(\mathscr{N}U(t+h)) \cdot 
(\nabla(\mathscr{N}U(t+h))
- \nabla(\mathscr{N}U(t)))}{h} \, dx 
\\
\nonumber
&\quad \, 
+ \lim_{h \to 0} \int_\Omega \frac{\nabla(\mathscr{N}U(t)) \cdot 
(\nabla(\mathscr{N}U(t+h))
- \nabla(\mathscr{N}U(t)))}{h} \, dx
\\[2mm]
\label{K}
&=:K_1(t) +K_2(t).
\end{align}
First, we rewrite $K_1(t)$.
An integration by parts and \eqref{Def:N} 
entail that 
\begin{align*}
\nonumber
K_1(t)
\nonumber
&=\lim_{h \to 0} 
\Big(\frac{-\Delta(\mathscr{N}(U(t+h)-U(t)))}{h}, \ 
\mathscr{N}U(t+h)
\Big)_{L^2(\Omega)}
\\
&=
\lim_{h \to 0} 
\left\langle\frac{U(t+h)-U(t)}{h}-\frac{\mathscr{N}U(t+h)-\mathscr{N}U(t)}{h},\ 
\mathscr{N}U(t+h)
\right\rangle_{(H^1(\Omega))', H^1(\Omega)}
\\
&=\langle U_t(t)-(\mathscr{N}U)_t(t),\  \mathscr{N}U(t) \rangle
_{(H^1(\Omega))', H^1(\Omega)}
\\
&=
\langle U_t(t), \ \mathscr{N}U(t) \rangle
_{(H^1(\Omega))', H^1(\Omega)}
-\frac12\cdot\frac{d}{dt}\int_\Omega (\mathscr{N}U(t))^2 \, dx.
\end{align*}
Therefore, we obtain 
\begin{align} \label{K1}
K_1(t)
=\langle U_t(t), \ \mathscr{N}U(t) \rangle
_{(H^1(\Omega))', H^1(\Omega)}
-\frac12\cdot\frac{d}{dt}\int_\Omega (\mathscr{N}U(t))^2 \, dx.
\end{align}
A similar argument yields
\begin{align} \label{K2}
K_2(t)
=\langle U_t(t), \ \mathscr{N}U(t) \rangle
_{(H^1(\Omega))', H^1(\Omega)}
-\frac12\cdot\frac{d}{dt}\int_\Omega (\mathscr{N}U(t))^2 \, dx.
\end{align}
Thus, we combine \eqref{K1} and \eqref{K2}  
with \eqref{K} to find that
\begin{align*} 
\int_\Omega U_t (\mathscr{N}U) \, dx
=\frac12 \cdot \frac{d}{dt} \int_\Omega |\nabla(\mathscr{N}U)|^2 \, dx
+\frac12 \cdot \frac{d}{dt} \int_\Omega (\mathscr{N}U)^2 \, dx.
\end{align*}
Integrating this identity over $(0,t)$, we have
\begin{align} 
\nonumber
\int_0^t \int_\Omega U_t (\mathscr{N}U(t)) \, dxds
&=\frac12 \int_\Omega |\nabla(\mathscr{N}U(t))|^2 \, dx
-\frac12 \int_\Omega |\nabla(\mathscr{N}U(0))|^2 \, dx \\
\label{XX}
&\quad 
+\frac12 \int_\Omega (\mathscr{N}U(t))^2 \, dx
-\frac12 \int_\Omega (\mathscr{N}U(0))^2 \, dx.
\end{align}
Here, 
it follows from an integration by parts, \eqref{Def:N} 
and the initial conditions for $u$ and $\widehat{u}$ that
\begin{align*} 
\int_\Omega |\nabla(\mathscr{N}U(0))|^2 \, dx 
+\int_\Omega |\mathscr{N}U(0)|^2 \, dx
=0,
\end{align*}
which together with \eqref{XX} concludes \eqref{E}. 
A combination of \eqref{+} and \eqref{E} thus shows that
\begin{align} 
\nonumber
&\frac12 \int_\Omega |\nabla(\mathscr{N}U)|^2 \, dx
+\frac12 \int_\Omega (\mathscr{N}U)^2 \, dx 
\\ 
\nonumber
&\le 
\frac{c_2(T)+c_4(T)}{2}\delta
\int_0^t \int_\Omega V^2 \, dxds
+ \frac{\delta}{2} \int_0^t \int_\Omega | \nabla V |^2 \, dxds \\
\nonumber
&\quad \, + \frac{1}{2\delta} \int_0^t 
\big(\| \nabla v 
\|_{L^\infty(\Omega)}^2 + \| h \|_{L^\infty((0,1) \times (0,L(T)))}^2 \big)
\Big(\int_\Omega 
| \nabla(\mathscr{N}U) |^2 \, dx\Big)ds \\
\label{AA}
&\quad \, + \frac{1}{\delta} \int_0^t \int_\Omega (\mathscr{N}U)^2 \, 
dxds.
\end{align}
On the other hand, 
we argue similarly for \eqref{Def4}
with the assumption on $g$ in
\eqref{SCon} that
there exists a positive constant $c_5(T)$
such that
\begin{align}
\nonumber
&\| V(t) \|_{L^2(\Omega)}^2
+ \int_0^t \int_\Omega | \nabla V |^2 \, dxds \\
\nonumber
&\le c_{5}(T) \int_0^t \| V \|_{L^2(\Omega)}^2 \, ds
+ c_{5}(T) \int_0^t \big(1+\| \nabla v \|_{L^\infty(\Omega)}^2\big)\Big(\int_\Omega 
| \nabla(\mathscr{N}U) |^2 \, dx\Big)ds \\
\label{BB}
&\quad \, 
+c_5(T)\int_0^t \|\mathscr{N}U\|^2_{L^2(\Omega)}\, ds.
\end{align}
Collecting \eqref{AA} and \eqref{BB}, we have
\begin{align*}
&\int_\Omega |\nabla(\mathscr{N}U)|^2 \, dx
+\int_\Omega (\mathscr{N}U)^2 \, dx 
+\| V(t) \|_{L^2(\Omega)}^2 \\
&\le 
((c_2(T)+c_4(T))\delta+c_5(T))
\int_0^t \int_\Omega V^2 \, dxds
+ (\delta-1)\int_0^t \int_\Omega | \nabla V |^2 \, dxds \\
\nonumber
&\quad \, + \Big(\frac{1}{\delta}+c_{5}(T)\Big) 
\int_0^t 
\big(\| \nabla v 
\|_{L^\infty(\Omega)}^2 + \| h \|_{L^\infty((0,1) \times (0,L(T)))}^2 +1 \big)
\Big(\int_\Omega 
| \nabla(\mathscr{N}U) |^2 \, dx\Big)ds \\
\nonumber
&\quad \, + \Big(\frac{2}{\delta} +c_5(T)\Big)\int_0^t \int_\Omega (\mathscr{N}U)^2 \, 
dxds
+c_{5}(T) \int_0^t \| V \|_{L^2(\Omega)}^2 \, ds.
\end{align*}
Since $\delta<1$ by \eqref{delta}, neglecting the second term on the right-hand side, we obtain
\begin{align}
\nonumber
&\| \nabla(\mathscr{N}U)(t) \|_{L^2(\Omega)}^2
+\| \mathscr{N}U(t) \|_{L^2(\Omega)}^2
+\| V(t) \|_{L^2(\Omega)}^2 \\
\nonumber
&\le 
c_6(T)\int_0^t \big(\| \nabla v \|_{L^\infty(\Omega)}^2 
+ \| h \|_{L^\infty((0,1) \times (0,L(T)))}^2 
+ 1 \big)
\\
\label{GWL}
&\hspace{21mm} \times \big(
\| \nabla(\mathscr{N}U) \|_{L^2(\Omega)}^2
+\| \mathscr{N}U \|_{L^2(\Omega)}^2
+\| V \|_{L^2(\Omega)}^2
\big)
\, ds
\end{align}
with some positive constant $c_6(T)$.
Recalling that 
$u,v \in L^{\infty}(\Omega \times (0,t))$, 
and that  
$v_0 \in W^{2,p}(\Omega)$ with $p>N$ and 
$\nabla v_0 \cdot \nu |_{\partial\Omega}=0$, we see from 
the maximal Sobolev regularity for parabolic equations
(\cite[Lemma 2.1]{IY-2020-DCDS}, \cite[3.1 Theorem]{HP-1997-CPDE}) 
that $v \in L^p((0,t);W^{2,p}(\Omega))$,
and hence, 
the Sobolev embedding 
implies 
$\nabla v \in L^2((0,t);(L^\infty(\Omega))^N)$.
Thus, noting that 
$\nabla \mathscr{N}U(0)= 0$ and $\mathscr{N}U(0) =V(0)=0$,
we infer from
the Gronwall lemma in \eqref{GWL} that 
\begin{align*}
\| \nabla(\mathscr{N}U)(t) \|_{L^2(\Omega)}^2
+\| \mathscr{N}U(t) \|_{L^2(\Omega)}^2
+\| V(t) \|_{L^2(\Omega)}^2
=0
\end{align*}
for all $t \in [0,T]$,
which implies
\begin{gather} \label{L}
\nabla(\mathscr{N}U)(t)=0 \quad \mbox{and} \quad
\mathscr{N}U(t) =0,
\\
\label{Vt}
V(t) =0 
\end{gather}
for all $t \in [0,T]$.
In view of \eqref{Def:N} and \eqref{L}, 
an integration by parts enables us to find that  
for all $t \in [0,T]$, 
\begin{align*} 
\int_\Omega U^2(t) \, dx 
&= \int_\Omega (-\Delta(\mathscr{N}U(t)) + \mathscr{N}U(t)) U(t) \, dx 
\\
&= \int_\Omega (-\Delta(\mathscr{N}U(t))) U(t) \, dx 
\\
&= \int_\Omega \nabla(\mathscr{N}U(t)) \cdot \nabla U(t) \, dx
\\
&=0,
\end{align*}
which yields $U(t)=0$ for all $t \in [0,T]$. 
This along with \eqref{Vt} establishes Theorem~\ref{Thm2}.
\end{proof}


\end{document}